\documentclass[11pt]{article}

\usepackage[T1]{fontenc}
\usepackage{lmodern}
\usepackage{amsmath,amssymb,amsthm,mathtools,bm}
\usepackage{booktabs,array}
\usepackage{enumitem}
\usepackage{geometry}
\usepackage{xcolor}
\usepackage{microtype}
\usepackage{graphicx}
\usepackage{algorithm}
\usepackage{algpseudocode}
\usepackage{hyperref}

\hypersetup{colorlinks=true,linkcolor=blue,citecolor=blue,urlcolor=blue}
\allowdisplaybreaks

\newtheorem{theorem}{Theorem}[section]
\newtheorem{proposition}[theorem]{Proposition}
\newtheorem{lemma}[theorem]{Lemma}
\newtheorem{corollary}[theorem]{Corollary}
\theoremstyle{definition}
\newtheorem{definition}[theorem]{Definition}
\newtheorem{assumption}[theorem]{Assumption}

\DeclareMathOperator{\rank}{rank}
\DeclareMathOperator{\diag}{diag}
\DeclareMathOperator*{\argmin}{arg\,min}
\DeclareMathOperator{\Log}{Log}

\newcommand{\cN}{\mathcal N}
\newcommand{\C}{\mathbb C}
\newcommand{\R}{\mathbb R}
\newcommand{\Id}{\mathrm I}
\newcommand{\eps}{\varepsilon}
\newcommand{\Xop}{\mathsf X}

\title{\bfseries Dilation-Covariant Hankel Pencils for Multiscale Recovery of Sparse Mellin Spectra}
\author{
Zhiliang Deng\thanks{Corresponding author. School of Mathematical Science, University of Electronic Science and Technology of China, dengzhl@uestc.edu.cn}
\and Xiaomei Yang\thanks{School of Mathematics, Southwest Jiaotong University, yangxiaomath@swjtu.edu.cn}}
\date{}

\begin{document}
\maketitle

\begin{abstract}
We study sparse Mellin spectral recovery from geometric samples.  
Dilation covariance is shown to generate the Hankel structure directly, rather than only after reduction to a classical exponential-sum model.  For complex exponents, incommensurate sampling scales remove the logarithmic aliasing that persists at a single scale.  A second recovery channel is obtained from the minimal Hankel pencil: contour winding counts spectral nodes regionally, while Rouch\'{e}-type bounds certify isolated nodes and unresolved clusters under noise.  The associated contour margin has an explicit degeneration rate as the sampling ratio approaches one.  Numerical experiments show that the auxiliary scale improves both identifiability and resolution, and that certified contour counts can remain reliable beyond the regime of accurate pointwise recovery.
\end{abstract}

\noindent\textbf{Keywords.}
Mellin transform; dilation covariance; Hankel pencil; multiscale spectral recovery; argument principle; Rouch\'{e} theorem; Prony method; $q$-Weyl relation.

\noindent\textbf{MSC 2020.}
42A38; 43A30; 42C40; 65T40; 47B35.

\section{Introduction}

Scale-dependent data on the positive half-line are naturally tied to the multiplicative structure of $\mathbb R_+$.  The logarithmic change of variables $t=\log x$ identifies this geometry with the additive geometry of $\mathbb R$: dilations become translations, and the Mellin transform becomes the Fourier transform \cite{ButzerJansche1997,Folland1995}.  For a sparse Mellin model
\begin{equation}\label{eq:mellin-sparse-model}
f(x)=\sum_{\ell=1}^{r}a_\ell x^{\alpha_\ell},
\qquad
a_\ell,\alpha_\ell\in\mathbb C,
\end{equation}
sampling on the geometric grid $x_n=x_0q^n$, with $x_0>0$ and $q\in(0,1)$, gives
\begin{equation}\label{eq:mellin-geometric-samples}
y_n^{(q)}
=f(x_0q^n)
=\sum_{\ell=1}^{r}a_\ell x_0^{\alpha_\ell}
\bigl(q^{\alpha_\ell}\bigr)^n.
\end{equation}
Thus, at a fixed scale, the recovery problem is a finite exponential-sum problem with nodes $q^{\alpha_\ell}$.  Its finite-rank Hankel matrices, annihilating polynomial, and matrix-pencil reconstruction belong to the classical Prony framework \cite{Prony1795}.  Operator-based versions of Prony's method place the same mechanism in a broader spectral-identification setting \cite{StampferPlonka2020,KellerPlonka2021}; the Banach-module formulation of Krishtal and Pfander \cite{KrishtalPfander2025} is a recent development in this direction.

The exponential-sum reduction is not the only route to the Hankel structure.  In the multiplicative setting, a homogeneous operator and a discrete dilation satisfy a commutation relation whose ordered products carry a scalar multiplier.  Removing this multiplier restores the additive index $i+j$ and produces a Hankel moment matrix before the finite Mellin model is imposed.  In the realization on $\mathbb R_+$, these normalized moments are the geometric samples in \eqref{eq:mellin-geometric-samples}; the usual Prony factorization then follows by specializing to the sparse spectrum \eqref{eq:mellin-sparse-model}.

For complex exponents, the sampling ratio also affects identifiability.  At one scale, $\alpha\mapsto q^\alpha$ is periodic in the imaginary direction, so a recovered node determines $\alpha$ only modulo a logarithmic lattice.  Several scales reduce the ambiguity to the intersection of their aliasing lattices, and two logarithmic steps with irrational ratio remove it globally.  A finite spectrum introduces the additional problem of pairing the unordered node sets recovered on the separate grids.  Distinct effective amplitudes provide the pairing directly, while repeated amplitudes can be treated through a compatibility gap in the joint scale embedding.  The resulting two-scale uniqueness statement differs from multiscale matrix-pencil constructions used primarily for conditioning or computational efficiency \cite{CuytLee2024}: here an auxiliary scale can change exact identifiability of a complex Mellin spectrum.

The stability analysis has two parts.  Singular-value estimates describe perturbation of the finite-rank Hankel structure and the loss of separation as $q\to1$; the same Vandermonde effects underlie super-resolution and clustered exponential recovery \cite{Liao2016,Batenkov2018,LiLiao2021,KatzDiabBatenkov2024,KunisNagel2020}.  A complementary description is obtained from the determinant of the minimal Hankel pencil.  Its zeros are the Mellin nodes, so the argument principle gives regional node counts, while Rouch\'{e}'s theorem supplies perturbation conditions under which those counts persist.  Small contours localize isolated nodes; larger contours certify the cardinality of unresolved clusters.  This leads to a dual-channel procedure: a projected-pencil channel estimates individual nodes, whereas a contour channel computes winding numbers directly and, when a deterministic noise bound is available, attaches an a posteriori certificate to the regional count.

The same characteristic quantifies the degeneration of contour stability near $q=1$.  The Mellin nodes coalesce at one, the square Hankel determinant vanishes at an explicit order, and the fixed-contour margin inherits the corresponding power of $|\log q|$.  The numerical experiments examine finite-rank recovery, the $q\to1$ degeneration, multiscale de-aliasing, clustered spectra under noise, and the distinct failure modes of the point and contour channels.

Section~\ref{sec:qweyl} derives the covariance-induced Hankel lifting and its geometric realization.  Section~\ref{sec:identifiability} treats multiscale aliasing, matching, and exact recovery.  Section~\ref{sec:stability} develops the perturbation and contour-stability analysis, and Section~\ref{sec:algorithm} gives the reconstruction procedure and numerical experiments.  Appendix~\ref{sec:qfock} records a $q$-Fock realization of the same commutation structure.

\section{Dilation covariance and Hankel lifting}
\label{sec:qweyl}

The multiplicative geometry is represented naturally on $L^2(\mathbb R_+,dx/x)$.  For $a>0$, let
$$
(D_af)(x)=f(ax).
$$
The operators $D_a$ are unitary, and the logarithmic map
$$
(\mathcal Uf)(t)=f(e^t)
$$
conjugates them to translations:
\begin{equation}\label{eq:dilation-translation}
\mathcal U D_a\mathcal U^{-1}=\tau_{\log a},
\qquad
(\tau_hg)(t)=g(t+h).
\end{equation}
Under the same identification, the Mellin transform is the Fourier transform in the additive variable and diagonalizes the dilation representation \cite{ButzerJansche1997,Folland1995}.  The generalized power modes satisfy
$$
D_ax^\alpha=a^\alpha x^\alpha,
\qquad
x^\alpha=e^{\alpha\log x},
$$
as an algebraic identity; a general power function need not belong to $L^2(\mathbb R_+,dx/x)$.

We now isolate the covariance relation from this realization.  Let $\mathcal X$ be a complex linear space and let $\{U_t\}_{t\in\mathbb R}$ be a representation of $(\mathbb R,+)$ by invertible operators.  Let $\mathcal D\subset\mathcal X$ be invariant under every $U_t$, and suppose that $M:\mathcal D\to\mathcal D$ satisfies
\begin{equation}\label{eq:homogeneous-covariance}
U_tMU_{-t}=e^{\kappa t}M
\end{equation}
for some $\kappa\in\mathbb C$.  Fix $h\ne0$, set $q=e^h>0$, and write $T_q=U_h$.  With the convention $q^\kappa=e^{\kappa h}$, \eqref{eq:homogeneous-covariance} becomes
\begin{equation}\label{eq:q-weyl-pair}
T_qM=q^\kappa MT_q.
\end{equation}
For $\kappa=1$ this is the usual $q$-Weyl commutation relation.

\begin{lemma}\label{lem:ordered}
For $j,k\in\mathbb N_0$,
\begin{equation}\label{eq:ordered}
T_q^jM^k=q^{\kappa jk}M^kT_q^j.
\end{equation}
\end{lemma}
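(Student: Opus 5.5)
The plan is a double induction on the two exponents, built on the basic relation \eqref{eq:q-weyl-pair}, $T_qM=q^\kappa MT_q$, on the invariant domain $\mathcal D$. All operators are applied to vectors of $\mathcal D$, which is invariant under $T_q=U_h$ and under $M$. Hence every composite such as $T_q^jM^k$ is well defined on $\mathcal D$, and we may rearrange factors freely using associativity of composition. The cases $j=0$ or $k=0$ are trivial, since the factor $q^{\kappa jk}=q^0=1$ and one side is the identity composed with a power.

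\textbf{Step 1 (fixed $j=1$, induction on $k$).} We show $T_qM^k=q^{\kappa k}M^kT_q$. The case $k=1$ is \eqref{eq:q-weyl-pair}. For the induction step, write
\[
T_qM^{k+1}=(T_qM^k)M=q^{\kappa k}M^k(T_qM)=q^{\kappa k}M^k\,q^\kappa MT_q=q^{\kappa(k+1)}M^{k+1}T_q .
\]
Here we use only that scalars commute with all linear operators.

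\textbf{Step 2 (induction on $j$ for fixed $k$).} Assuming $T_q^jM^k=q^{\kappa jk}M^kT_q^j$, we compute
\[
T_q^{j+1}M^k=T_q(T_q^jM^k)=q^{\kappa jk}(T_qM^k)T_q^j=q^{\kappa jk}q^{\kappa k}M^kT_q^{j+1}.
\]
The exponent becomes $\kappa(j+1)k$, which closes the induction.

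There is no real obstacle. The only point needing care is the meaning of $q^{\kappa jk}$ for complex $\kappa$. With the paper's convention $q^\kappa=e^{\kappa h}$, we interpret $q^{\kappa jk}=e^{\kappa jkh}$, so that $(q^\kappa)^{jk}=q^{\kappa jk}$ and $q^{\kappa a}q^{\kappa b}=q^{\kappa(a+b)}$ hold exactly, with no branch issues. Alternatively, one can derive the relation directly from \eqref{eq:homogeneous-covariance}. Since $U_{jh}M^kU_{-jh}=(U_{jh}MU_{-jh})^k=e^{\kappa jhk}M^k$ (conjugation being multiplicative), this gives the same identity at once, which serves as a consistency check.
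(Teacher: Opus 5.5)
Your double induction is correct and is essentially the paper's argument: the paper's proof is the one line ``repeated application of $T_qM=q^\kappa MT_q$,'' and your Steps~1 and~2 carry out that repetition. Your alternative via $U_{jh}M^kU_{-jh}=e^{\kappa jkh}M^k$ is also valid, but it is not needed.
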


\begin{proof}
Repeated application of \eqref{eq:q-weyl-pair} gives the identity.
\end{proof}

Set
\begin{equation}\label{eq:En}
E_n=M^nT_q^n,\qquad n\ge0.
\end{equation}

\begin{proposition}[Projective multiplication law]\label{prop:cocycle}
For all $i,j\ge0$,
\begin{equation}\label{eq:EiEj}
E_iE_j=q^{\kappa ij}E_{i+j}.
\end{equation}
The multiplier
$$
\omega_{q,\kappa}(i,j)=q^{\kappa ij}
$$
is normalized and satisfies
\begin{equation}\label{eq:cocycle-id}
\omega_{q,\kappa}(i,j)\omega_{q,\kappa}(i+j,k)
=\omega_{q,\kappa}(j,k)\omega_{q,\kappa}(i,j+k).
\end{equation}
Thus the additive monoid law on $\mathbb N_0$ is represented by $\{E_n\}$ up to the scalar multiplier $\omega_{q,\kappa}$.
\end{proposition}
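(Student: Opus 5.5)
The plan is to compute $E_iE_j$ directly from the definition \eqref{eq:En} and use Lemma~\ref{lem:ordered} to move the inner dilation factor past the inner power of $M$. Writing
$$
E_iE_j=M^iT_q^iM^jT_q^j,
$$
the middle pair $T_q^iM^j$ is exactly the ordered product treated in \eqref{eq:ordered} with $(j,k)$ replaced by $(i,j)$. Substituting $T_q^iM^j=q^{\kappa ij}M^jT_q^i$ turns the product into $q^{\kappa ij}M^{i}M^{j}T_q^{i}T_q^{j}=q^{\kappa ij}M^{i+j}T_q^{i+j}=q^{\kappa ij}E_{i+j}$, which is \eqref{eq:EiEj}.

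Everything is well defined because $\mathcal D$ is invariant under $U_h=T_q$ and under $M$. Hence all compositions are maps $\mathcal D\to\mathcal D$, and the power laws $M^iM^j=M^{i+j}$ and $T_q^iT_q^j=T_q^{i+j}$ hold on $\mathcal D$.

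Normalization means $\omega_{q,\kappa}(0,j)=\omega_{q,\kappa}(i,0)=1$. This is immediate from $q^{0}=1$ and is consistent with $E_0=\Id$ on $\mathcal D$. For the cocycle identity \eqref{eq:cocycle-id} I would compare exponents. With the convention $q^{\kappa s}=e^{\kappa hs}$, the multiplier is a character in the exponent, so both sides are $e^{\kappa h(\cdot)}$ evaluated at a bilinear expression. The left side gives $ij+(i+j)k$ and the right side gives $jk+i(j+k)$. Both equal $ij+ik+jk$, so the identity holds.

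There is also a structural check that needs no extra computation. Computing $E_iE_jE_k$ in the two associativity orders via \eqref{eq:EiEj} yields the two sides of \eqref{eq:cocycle-id} times $E_{i+j+k}$. The cocycle identity is therefore exactly the compatibility of the projective law with associativity. The final sentence of the proposition is then a restatement: $n\mapsto E_n$ is a projective representation of $(\mathbb N_0,+)$ with multiplier $\omega_{q,\kappa}$.

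The only point needing care is the use of Lemma~\ref{lem:ordered} with the correct index roles, so that $T_q^i$ passes to the right of $M^j$ and not the reverse. The reverse would produce $q^{-\kappa ij}$. Apart from that, the proof is routine bookkeeping.
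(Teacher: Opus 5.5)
Your proposal is correct and follows the paper's proof: both commute $T_q^i$ past $M^j$ using Lemma~\ref{lem:ordered} to obtain $E_iE_j=q^{\kappa ij}E_{i+j}$, and both verify the cocycle identity by observing that each side equals $q^{\kappa(ij+ik+jk)}$. Your remarks on normalization and on associativity as the source of the cocycle identity are correct additions and do not change the argument.
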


\begin{proof}
Lemma~\ref{lem:ordered} gives
$$
E_iE_j=M^iT_q^iM^jT_q^j
=q^{\kappa ij}M^{i+j}T_q^{i+j}.
$$
Both sides of \eqref{eq:cocycle-id} equal $q^{\kappa(ij+ik+jk)}$.
\end{proof}

Let $\mathcal L:\mathcal D\to\mathbb C$ be linear and let $f\in\mathcal D$.  The quantities $\mathcal L(E_iE_jf)$ contain the factor $q^{\kappa ij}$ and therefore need not depend on $i+j$ alone.

\begin{definition}[Cocycle-normalized Hankel lifting]\label{def:q-hankel}
For $m\ge1$, set
\begin{equation}\label{eq:q-hankel-def}
\mathcal H_m^{(q,\kappa)}[f;\mathcal L]
:=\left(q^{-\kappa ij}\mathcal L(E_iE_jf)\right)_{i,j=0}^{m-1}.
\end{equation}
By Proposition~\ref{prop:cocycle},
\begin{equation}\label{eq:q-hankel-standard}
\mathcal H_m^{(q,\kappa)}[f;\mathcal L]
=\left(\mathcal L(E_{i+j}f)\right)_{i,j=0}^{m-1}.
\end{equation}
\end{definition}

\begin{proposition}\label{prop:normalization-unique}
Suppose scalars $c_{ij}$ satisfy
$$
c_{ij}E_iE_j=E_{i+j}
$$
whenever $E_{i+j}\ne0$.  Then $c_{ij}=q^{-\kappa ij}$.
\end{proposition}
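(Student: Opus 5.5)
The argument is a direct comparison with the projective multiplication law of Proposition~\ref{prop:cocycle}. Fix $i,j\ge0$ with $E_{i+j}\ne0$. Proposition~\ref{prop:cocycle} gives $E_iE_j=q^{\kappa ij}E_{i+j}$. Substituting this into the hypothesis $c_{ij}E_iE_j=E_{i+j}$ yields
\begin{equation*}
c_{ij}q^{\kappa ij}E_{i+j}=E_{i+j},
\qquad\text{that is,}\qquad
\bigl(c_{ij}q^{\kappa ij}-1\bigr)E_{i+j}=0 .
\end{equation*}
Here $E_{i+j}$ is a nonzero element of the vector space of linear operators on $\mathcal D$. A scalar multiple of a nonzero vector vanishes only when the scalar is zero, so $c_{ij}q^{\kappa ij}=1$.

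It remains to invert the multiplier. With the convention $q^{\kappa}=e^{\kappa h}$ we have $q^{\kappa ij}=e^{\kappa h ij}$, an exponential of a complex number. It is therefore nonzero, and $(q^{\kappa ij})^{-1}=e^{-\kappa h ij}=q^{-\kappa ij}$. This gives $c_{ij}=q^{-\kappa ij}$, as claimed.

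No step is a real obstacle. The only point needing care is to read the hypothesis correctly: $c_{ij}$ is determined only for pairs $(i,j)$ with $E_{i+j}\ne0$. For pairs with $E_{i+j}=0$, the identity $E_iE_j=q^{\kappa ij}E_{i+j}$ forces $E_iE_j=0$ as well, so any $c_{ij}$ works and uniqueness cannot hold. The statement is therefore restricted to the nondegenerate indices, and the argument above covers exactly those indices.
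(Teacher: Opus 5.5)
Your proposal is correct and follows the paper's proof: you substitute the multiplication law $E_iE_j=q^{\kappa ij}E_{i+j}$ into the hypothesis and cancel the nonzero operator $E_{i+j}$ to get $c_{ij}=q^{-\kappa ij}$. Your additional remarks, that $q^{\kappa ij}=e^{\kappa hij}\neq 0$ and that $c_{ij}$ is unconstrained when $E_{i+j}=0$, are accurate and only make explicit what the paper leaves implicit.
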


\begin{proof}
By \eqref{eq:EiEj},
$c_{ij}q^{\kappa ij}E_{i+j}=E_{i+j}$, and the claim follows whenever $E_{i+j}\ne0$.
\end{proof}

Hence $q^{-\kappa ij}$ is the unique entrywise scalar normalization that restores the additive index $i+j$; it is not an independent reweighting of the moments.

We next specialize to geometric sampling.  Fix $x_0>0$ and $q\in(0,1)$, and let $\mathcal F$ be a linear space of pointwise-defined functions on $\mathbb R_+$ that is invariant under
\begin{equation}\label{eq:scale-ops}
(T_qf)(x)=f(qx),\qquad
(\Xop_{x_0}f)(x)=\frac{x}{x_0}f(x).
\end{equation}
We also require evaluation at $x_0$ to be defined on $\mathcal F$; finite linear combinations of power functions suffice for the sparse model.  Since
$$
T_q\Xop_{x_0}=q\Xop_{x_0}T_q,
$$
this is the case $\kappa=1$ of the abstract construction.  With
$$
\mathcal L_{x_0}(g)=g(x_0),
\qquad
E_n=\Xop_{x_0}^nT_q^n,
$$
we obtain the following realization.

\begin{proposition}\label{prop:geometric-representation}
For $f\in\mathcal F$,
\begin{equation}\label{eq:En-action}
(E_nf)(x)=\left(\frac{x}{x_0}\right)^nf(q^nx),
\end{equation}
and hence
\begin{equation}\label{eq:En-eval}
\mathcal L_{x_0}(E_nf)=f(q^nx_0).
\end{equation}
Moreover,
\begin{equation}\label{eq:q-hankel-geometric}
q^{-ij}\mathcal L_{x_0}(E_iE_jf)=f(q^{i+j}x_0).
\end{equation}
\end{proposition}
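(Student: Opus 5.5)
The plan is to prove \eqref{eq:En-action} by induction on $n$, read off \eqref{eq:En-eval} by evaluation, and obtain \eqref{eq:q-hankel-geometric} from Proposition~\ref{prop:cocycle} with $\kappa=1$.

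For \eqref{eq:En-action}, first note from \eqref{eq:scale-ops} that $(T_q^nf)(x)=f(q^nx)$ and $(\Xop_{x_0}^ng)(x)=(x/x_0)^ng(x)$. Each of these follows by a trivial induction on $n$ and uses only the invariance of $\mathcal F$ under $T_q$ and $\Xop_{x_0}$. Composing them as in \eqref{eq:En} gives
$$
(E_nf)(x)=\Bigl(\frac{x}{x_0}\Bigr)^n f(q^nx).
$$
Setting $x=x_0$ makes the prefactor equal to $1$, which yields \eqref{eq:En-eval}. Point evaluation at $x_0$ is legitimate because evaluation at $x_0$ is assumed to be defined on $\mathcal F$, and $E_nf\in\mathcal F$.

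For \eqref{eq:q-hankel-geometric}, first check directly that
$$
(T_q\Xop_{x_0}g)(x)=\frac{qx}{x_0}g(qx)=q\,(\Xop_{x_0}T_qg)(x).
$$
This is \eqref{eq:q-weyl-pair} with $\kappa=1$, so Lemma~\ref{lem:ordered} and Proposition~\ref{prop:cocycle} apply on $\mathcal D=\mathcal F$. The only subtlety is that the abstract framework was phrased through a representation $U_t$, whereas here only the relation \eqref{eq:q-weyl-pair} is needed. The proofs of Lemma~\ref{lem:ordered} and Proposition~\ref{prop:cocycle} use nothing else, so they carry over verbatim.

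Proposition~\ref{prop:cocycle} then gives $E_iE_j=q^{ij}E_{i+j}$. Hence
$$
q^{-ij}\mathcal L_{x_0}(E_iE_jf)=\mathcal L_{x_0}(E_{i+j}f)=f(q^{i+j}x_0)
$$
by \eqref{eq:En-eval}.

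As an independent sanity check, the composition can be computed directly:
$$
(E_iE_jf)(x)=\Bigl(\frac{x}{x_0}\Bigr)^i\Bigl(\frac{q^ix}{x_0}\Bigr)^j f(q^{i+j}x).
$$
At $x=x_0$ this equals $q^{ij}f(q^{i+j}x_0)$, which confirms the multiplier.

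No step is a genuine obstacle. The only point needing care is the domain bookkeeping, namely that all iterates stay in $\mathcal F$ so that evaluation makes sense. This is guaranteed by the invariance hypothesis.
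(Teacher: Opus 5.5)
Your proposal is correct and follows the paper's proof. It composes $T_q^n$ and $\Xop_{x_0}^n$ to get the action of $E_n$, evaluates at $x_0$, and applies Proposition~\ref{prop:cocycle} with $\kappa=1$ for the normalized product; your direct check $(E_iE_jf)(x_0)=q^{ij}f(q^{i+j}x_0)$ and your remark that only the relation $T_q\Xop_{x_0}=q\Xop_{x_0}T_q$ is needed (not a full representation $U_t$) are sound additions.
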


\begin{proof}
The first identity follows from $T_q^nf(x)=f(q^nx)$ and
$\Xop_{x_0}^ng(x)=(x/x_0)^ng(x)$.  Evaluation at $x_0$ gives \eqref{eq:En-eval}, and \eqref{eq:q-hankel-geometric} follows from Proposition~\ref{prop:cocycle}.
\end{proof}

Thus the abstract lifting becomes the geometric Hankel matrix
\begin{equation}\label{eq:scale-hankel}
\mathcal H_m^{(q)}[f]
:=\left(f(q^{i+j}x_0)\right)_{i,j=0}^{m-1}.
\end{equation}
The Hilbert-space realization and the sampling functional play different roles here.  The dilation representation is naturally defined on $L^2(\mathbb R_+,dx/x)$, whereas point evaluation is not bounded on that space; the latter is used only on the pointwise class $\mathcal F$.

For the sparse model \eqref{eq:mellin-sparse-model}, write
\begin{equation}\label{eq:w-rho}
w_\ell=a_\ell x_0^{\alpha_\ell},
\qquad
\rho_\ell(q)=q^{\alpha_\ell}=e^{\alpha_\ell\log q}.
\end{equation}
Then
\begin{equation}\label{eq:moments-scale}
\mu_n^{(q)}:=f(x_0q^n)
=\sum_{\ell=1}^{r}w_\ell\rho_\ell(q)^n.
\end{equation}
When several grids have the same base point $x_0$, the weights $w_\ell$ are common to all scales, whereas the nodes $\rho_\ell(q)$ depend on $q$.

From this point on we use $H_{m,s}^{(q)}$ for finite Hankel blocks, with the first subscript denoting the matrix size and the second the shift:
\begin{equation}\label{eq:shifted-hankel}
H_{m,s}^{(q)}
:=\left(\mu_{i+j+s}^{(q)}\right)_{i,j=0}^{m-1},
\qquad m\ge1,\quad s\ge0.
\end{equation}
In particular, $H_{m,0}^{(q)}=\mathcal H_m^{(q)}[f]$.  Let
$$
V_m(\rho(q))
=
\bigl(\rho_\ell(q)^k\bigr)_{
0\le k\le m-1,\;1\le\ell\le r},
\qquad
W=\diag(w_1,\ldots,w_r),
\qquad
\Theta_q=\diag\bigl(\rho_1(q),\ldots,\rho_r(q)\bigr).
$$
The superscript $T$ denotes the ordinary transpose rather than the conjugate transpose.

\begin{proposition}\label{prop:scale-factorization}
Assume that $\rho_1(q),\ldots,\rho_r(q)$ are pairwise distinct and $w_\ell\ne0$.  Then, for every $s\ge0$,
\begin{equation}\label{eq:scale-factorization}
H_{m,s}^{(q)}
=V_m(\rho(q))W\Theta_q^sV_m(\rho(q))^T.
\end{equation}
If $m\ge r$, then $\rank H_{m,s}^{(q)}=r$.  For $m=r$ and $s=0$,
\begin{equation}\label{eq:det-scale}
\det H_{r,0}^{(q)}
=\left(\prod_{\ell=1}^{r}w_\ell\right)
\prod_{1\le k<\ell\le r}
\bigl(\rho_\ell(q)-\rho_k(q)\bigr)^2.
\end{equation}
\end{proposition}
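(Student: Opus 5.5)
The plan is to verify the factorization entrywise, get the rank from the Vandermonde factors, and read off the determinant at $m=r$.

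\textbf{Factorization.} By \eqref{eq:moments-scale}, the $(i,j)$ entry of $H_{m,s}^{(q)}$ is
$$
\mu_{i+j+s}^{(q)}=\sum_{\ell=1}^r \rho_\ell(q)^i\,\bigl(w_\ell\rho_\ell(q)^s\bigr)\,\rho_\ell(q)^j .
$$
This is exactly the $(i,j)$ entry of $V_m(\rho(q))\,W\Theta_q^s\,V_m(\rho(q))^T$. Here $V_m$ has rows indexed by $i$ and columns by $\ell$, and the middle factor is the diagonal matrix with entries $w_\ell\rho_\ell(q)^s$. No hypotheses are needed for this step, and the transpose is the plain one, consistent with the stated convention.

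\textbf{Rank.} Let $m\ge r$. The leading $r\times r$ block of $V_m(\rho(q))$ is a square Vandermonde matrix with pairwise distinct nodes, so it is invertible. Hence $V_m$ has full column rank $r$, and likewise $V_m^T$ has full row rank $r$. The middle diagonal factor has entries $w_\ell\rho_\ell(q)^s$. We need $\rho_\ell(q)\ne0$, which holds because $\rho_\ell(q)=e^{\alpha_\ell\log q}$ is an exponential and never vanishes. Together with $w_\ell\ne0$, this makes the middle factor invertible.

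The product $AB C$, with $A$ of full column rank, $B$ invertible, and $C$ of full row rank, has rank $r$. The argument is that left multiplication by an injective map and right multiplication by a surjective map preserve rank, so $\rank(ABC)=\rank B=r$. This gives $\rank H_{m,s}^{(q)}=r$.

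\textbf{Determinant.} For $m=r$ and $s=0$, all three factors are square, so
$$
\det H_{r,0}^{(q)}=\det V_r\cdot\det W\cdot\det V_r^T=\Bigl(\prod_\ell w_\ell\Bigr)(\det V_r)^2 .
$$
The classical Vandermonde formula gives $\det V_r=\prod_{k<\ell}(\rho_\ell-\rho_k)$. Squaring it yields \eqref{eq:det-scale}. The determinant identity in fact holds without the distinctness hypothesis, since both sides are polynomial identities in the nodes.

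\textbf{Main obstacle.} There is no real obstacle. The only point requiring care is checking that the middle factor is invertible for $s>0$, which rests on the nonvanishing of $q^{\alpha_\ell}$.
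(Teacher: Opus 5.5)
Your proposal is correct and follows the paper's proof: the factorization is checked entrywise from \eqref{eq:moments-scale}, and the rank and determinant statements come from the Vandermonde factor together with the nonvanishing of the diagonal entries $w_\ell\rho_\ell(q)^s$. You state explicitly what the paper leaves implicit, namely that $\rho_\ell(q)=e^{\alpha_\ell\log q}\neq0$ and the full-column-rank argument for $m\ge r$.
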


\begin{proof}
Equation \eqref{eq:moments-scale} gives \eqref{eq:scale-factorization} entrywise.  The rank and determinant statements follow from the Vandermonde matrix and the nonvanishing of the diagonal factors.
\end{proof}

At a fixed scale, define the Prony annihilating polynomial
\begin{equation}\label{eq:prony-polynomial}
P_q(z)
:=\prod_{\ell=1}^{r}\bigl(z-\rho_\ell(q)\bigr)
=\sum_{j=0}^{r}p_j^{(q)}z^j.
\end{equation}
Since $P_q(\rho_\ell(q))=0$,
\begin{equation}\label{eq:annihilation}
\sum_{j=0}^{r}p_j^{(q)}\mu_{n+j}^{(q)}=0,
\qquad n\ge0.
\end{equation}
Under the hypotheses of Proposition~\ref{prop:scale-factorization}, $P_q$ is the minimal annihilating polynomial of the moment sequence.

The matrix-pencil formulation uses the two consecutive $r\times r$ blocks already contained in \eqref{eq:shifted-hankel}.  Set
\begin{equation}\label{eq:pencil-characteristic}
A_q(z):=H_{r,1}^{(q)}-zH_{r,0}^{(q)},
\qquad
\Psi_q(z):=\det A_q(z).
\end{equation}
Their factorizations give
\begin{equation}\label{eq:pencil-characteristic-factor}
A_q(z)
=V_rW(\Theta_q-z\Id)V_r^T,
\qquad
\Psi_q(z)
=\det H_{r,0}^{(q)}\prod_{\ell=1}^{r}\bigl(\rho_\ell(q)-z\bigr).
\end{equation}
Equivalently,
\begin{equation}\label{eq:psi-prony-relation}
\Psi_q(z)=(-1)^r\det H_{r,0}^{(q)}P_q(z).
\end{equation}
Thus the generalized eigenvalues of the pencil $A_q(z)$, the zeros of $\Psi_q$, and the zeros of the Prony polynomial $P_q$ are the same nodes $\rho_\ell(q)$.

For real exponents,
\begin{equation}\label{eq:alpha-from-rho}
\alpha_\ell=\frac{\log\rho_\ell(q)}{\log q},
\qquad
a_\ell=w_\ell x_0^{-\alpha_\ell}.
\end{equation}
For complex exponents the logarithm is multivalued, which leads to the multiscale question considered next.

\section{Multiscale identifiability and exact recovery}
\label{sec:identifiability}

At one scale, recovering the node $\rho=q^\alpha$ does not determine a complex exponent uniquely.  Writing $h=\log q<0$, define
\begin{equation}\label{eq:single-embedding}
\Phi_h(\alpha)=e^{h\alpha}=q^\alpha.
\end{equation}
The periodicity of the exponential gives the complete single-scale ambiguity.

\begin{proposition}\label{prop:single-alias}
For $\alpha,\beta\in\C$,
\begin{equation}\label{eq:single-alias}
q^\alpha=q^\beta
\quad\Longleftrightarrow\quad
\alpha-\beta\in\frac{2\pi i}{\log q}\mathbb Z.
\end{equation}
\end{proposition}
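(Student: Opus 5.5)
The plan is to reduce the statement to the kernel of the complex exponential. Write $h=\log q$, which is a nonzero real number since $q\in(0,1)$; in fact $h<0$. By the convention in \eqref{eq:w-rho}, $q^\alpha=e^{h\alpha}$ for every $\alpha\in\C$. Hence
\[
q^\alpha=q^\beta \iff e^{h\alpha}=e^{h\beta} \iff e^{h(\alpha-\beta)}=1,
\]
where the last equivalence uses that $e^{h\beta}\neq0$, so we may multiply by its inverse $e^{-h\beta}$ and use the functional equation $e^{u}e^{v}=e^{u+v}$.

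The key fact is the standard description of the kernel of $\exp:\C\to\C^\times$: $e^w=1$ if and only if $w\in2\pi i\mathbb Z$. For completeness I would sketch it. Write $w=u+iv$, so that $e^w=e^u(\cos v+i\sin v)$. Taking moduli, $e^w=1$ forces $e^u=1$, i.e.\ $u=0$. It then forces $\cos v=1$ and $\sin v=0$, i.e.\ $v\in2\pi\mathbb Z$. The converse is immediate.

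Applying this with $w=h(\alpha-\beta)$ gives $h(\alpha-\beta)\in2\pi i\mathbb Z$. Since $h\neq0$ is real, dividing by $h$ yields $\alpha-\beta\in\frac{2\pi i}{\log q}\mathbb Z$, which is \eqref{eq:single-alias}. Every step is reversible, so both directions of the equivalence follow together.

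There is no genuine obstacle here. The only point requiring care is that $q^\alpha$ is \emph{defined} through the real logarithm $\log q$, as fixed in \eqref{eq:w-rho} and \eqref{eq:single-embedding}. So no branch choice enters, and the map $\alpha\mapsto q^\alpha$ is a genuine group homomorphism $(\C,+)\to(\C^\times,\cdot)$. The lattice $\frac{2\pi i}{\log q}\mathbb Z$ is exactly its kernel, and the proposition is the statement that two exponents have the same image precisely when they differ by an element of that kernel.
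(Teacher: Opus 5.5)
Your proof is correct and matches the paper's argument: you set $h=\log q\neq0$ (real), reduce $q^\alpha=q^\beta$ to $e^{h(\alpha-\beta)}=1$, and use that the kernel of the exponential is $2\pi i\mathbb Z$. Your sketch of that kernel fact is the only detail beyond the paper's one-line proof.
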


\begin{proof}
The equality $q^\alpha=q^\beta$ is equivalent to $e^{h(\alpha-\beta)}=1$, hence to $h(\alpha-\beta)=2\pi ik$ for some $k\in\mathbb Z$.
\end{proof}

One may select a representative by restricting to the fundamental strip
\begin{equation}\label{eq:fundamental-strip}
\Omega_q
=\left\{\alpha\in\C:
-\frac{\pi}{|\log q|}<\operatorname{Im}\alpha
\le\frac{\pi}{|\log q|}\right\},
\end{equation}
but this is an a priori branch restriction rather than a resolution supplied by the data.

For scales $q_j\in(0,1)$, set $h_j=\log q_j$ and define
\begin{equation}\label{eq:joint-embedding}
\Phi_{\bm h}(\alpha)
=\bigl(e^{h_1\alpha},\ldots,e^{h_s\alpha}\bigr),
\end{equation}
together with the joint aliasing lattice
\begin{equation}\label{eq:joint-lattice}
\Lambda(\bm h)
:=\left\{\delta\in\C:
h_j\delta\in2\pi i\mathbb Z
\text{ for }j=1,\ldots,s\right\}.
\end{equation}

\begin{theorem}[Multiscale aliasing classification]\label{thm:multi-alias}
For $\alpha,\beta\in\C$,
$$
\Phi_{\bm h}(\alpha)=\Phi_{\bm h}(\beta)
\quad\Longleftrightarrow\quad
\alpha-\beta\in\Lambda(\bm h).
$$
Moreover:
\begin{enumerate}[label=(\roman*)]
\item if $h_j/h_k\notin\mathbb Q$ for at least one pair $j\ne k$, then $\Lambda(\bm h)=\{0\}$ and $\Phi_{\bm h}$ is injective on $\C$;
\item if $h_j=n_jd$ with $d\ne0$ and nonzero relatively prime integers $n_1,\ldots,n_s$, then
\begin{equation}\label{eq:commensurate-lattice}
\Lambda(\bm h)=\frac{2\pi i}{d}\mathbb Z.
\end{equation}
\end{enumerate}
\end{theorem}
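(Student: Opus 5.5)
The equivalence follows by applying Proposition~\ref{prop:single-alias} coordinatewise. $\Phi_{\bm h}(\alpha)=\Phi_{\bm h}(\beta)$ means $e^{h_j\alpha}=e^{h_j\beta}$ for every $j$, that is, $e^{h_j(\alpha-\beta)}=1$. This holds exactly when $h_j(\alpha-\beta)\in2\pi i\mathbb Z$ for all $j$, which is the defining condition of $\Lambda(\bm h)$ in \eqref{eq:joint-lattice}. The same argument gives the general structural fact used below: $\Lambda(\bm h)=\bigcap_j \frac{2\pi i}{h_j}\mathbb Z$.

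For (i), take $\delta\in\Lambda(\bm h)$ and a pair $j\ne k$ with $h_j/h_k\notin\mathbb Q$. Write $h_j\delta=2\pi i m_j$ and $h_k\delta=2\pi i m_k$ with $m_j,m_k\in\mathbb Z$. Since $h_j,h_k$ are nonzero reals, if $\delta\ne0$ then $m_k\ne0$, and dividing gives $h_j/h_k=m_j/m_k\in\mathbb Q$, a contradiction. Hence $\delta=0$. Injectivity of $\Phi_{\bm h}$ then follows from the first part. The only care needed is to rule out $m_k=0$, which holds because $h_k\delta=0$ with $h_k\ne0$ forces $\delta=0$.

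For (ii), I prove two inclusions. First, if $\delta=2\pi i k/d$, then $h_j\delta=2\pi i\,n_jk\in2\pi i\mathbb Z$ for all $j$, so $\frac{2\pi i}{d}\mathbb Z\subset\Lambda(\bm h)$. Conversely, let $\delta\in\Lambda(\bm h)$ and set $c=d\delta/(2\pi i)$. The condition becomes $n_jc\in\mathbb Z$ for all $j$. Because $\gcd(n_1,\ldots,n_s)=1$, Bézout supplies integers $u_j$ with $\sum_j u_jn_j=1$. Then $c=\sum_j u_j(n_jc)\in\mathbb Z$, so $\delta\in\frac{2\pi i}{d}\mathbb Z$.

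The main (mild) obstacle is the reverse inclusion in (ii), where the coprimality hypothesis enters through Bézout's identity. Everything else is a direct unwinding of Proposition~\ref{prop:single-alias}. I would also note that $d$ may be negative, which does not affect the lattice $\frac{2\pi i}{d}\mathbb Z$.
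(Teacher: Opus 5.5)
Your proposal is correct and follows the paper's proof essentially step for step: the coordinatewise reduction to $e^{h_j(\alpha-\beta)}=1$, the ratio argument $h_j/h_k=m_j/m_k\in\mathbb Q$ for (i), and the two inclusions for (ii), with B\'{e}zout's identity giving the reverse inclusion. The justification that $m_k\ne0$ (from $h_k\ne0$) and the explicit combination $c=\sum_j u_j(n_jc)$ just write out what the paper leaves implicit.
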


\begin{proof}
The first assertion follows from $e^{h_j(\alpha-\beta)}=1$ for every $j$.  If $0\ne\delta\in\Lambda(\bm h)$, then $h_j\delta=2\pi ik_j$ with $k_j\ne0$, and hence
$$
\frac{h_j}{h_k}=\frac{k_j}{k_k}\in\mathbb Q.
$$
This proves (i).  For (ii), $2\pi im/d$ belongs to $\Lambda(\bm h)$ for every $m\in\mathbb Z$.  Conversely, if $n_jd\delta/(2\pi i)\in\mathbb Z$ for every $j$, B\'{e}zout's identity and $\gcd(n_1,\ldots,n_s)=1$ imply $d\delta/(2\pi i)\in\mathbb Z$.
\end{proof}

In the commensurate case the ambiguity is reduced but not removed: the joint map is injective only after restricting the imaginary part to a fundamental strip of width $2\pi/|d|$.  Thus rationally related scales can enlarge the unambiguous range without giving global uniqueness on $\mathbb C$.

The theorem concerns a single exponent.  A finite spectrum introduces a second issue: the nodes recovered independently at different scales are unordered.  With a common base point $x_0$, the data at scale $q_j$ are
\begin{equation}\label{eq:multi-samples}
y_n^{(j)}
=\sum_{\ell=1}^{r}w_\ell\rho_{\ell,j}^n,
\qquad
\rho_{\ell,j}=q_j^{\alpha_\ell},
\end{equation}
where $w_\ell=a_\ell x_0^{\alpha_\ell}$ is independent of $j$.  Exact Prony recovery on each grid therefore returns the unordered set
\begin{equation}\label{eq:unordered-pairs}
\mathcal P_j
=\left\{(\rho_{\ell,j},w_\ell):\ell=1,\ldots,r\right\}.
\end{equation}

If the effective weights $w_1,\ldots,w_r$ are pairwise distinct, their scale independence determines the cross-scale pairing uniquely: each set $\mathcal P_j$ contains exactly one pair with second component $w_\ell$.  When weights repeat, the node geometry itself can supply the pairing.  For two scales and an admissible set $\Omega\subset\C$, define
\begin{equation}\label{eq:compat-cost}
d_\Omega(z_1,z_2)
:=\inf_{\alpha\in\Omega}
\left(|e^{h_1\alpha}-z_1|^2+|e^{h_2\alpha}-z_2|^2\right)^{1/2}.
\end{equation}
A correctly paired noiseless node pair has zero cost whenever the corresponding exponent lies in $\Omega$.

\begin{assumption}\label{ass:matching-gap}
There exists $\gamma>0$ such that every true pair has zero compatibility cost, while every false pair satisfies
$$
d_\Omega(\rho_{k,1},\rho_{\ell,2})\ge\gamma,
\qquad k\ne\ell.
$$
\end{assumption}

\begin{proposition}\label{prop:compat-matching}
Under Assumption~\ref{ass:matching-gap}, the true pairing is the unique minimum-total-cost bipartite matching.  Suppose, more generally, that nonnegative perturbed costs $\widetilde d_{k\ell}$ satisfy
$$
|\widetilde d_{k\ell}-d_\Omega(\rho_{k,1},\rho_{\ell,2})|\le\delta
$$
for all $k,\ell$.  If
\begin{equation}\label{eq:matching-perturbation}
\delta<\frac{\gamma}{r+1},
\end{equation}
then the true pairing remains the unique minimum-total-cost matching.
\end{proposition}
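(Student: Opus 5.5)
We identify a bipartite matching with a permutation $\sigma$ of $\{1,\ldots,r\}$, pairing $\rho_{k,1}$ with $\rho_{\sigma(k),2}$. Write $d_{k\ell}=d_\Omega(\rho_{k,1},\rho_{\ell,2})$. The total cost is $C(\sigma)=\sum_k d_{k\sigma(k)}$, and $\widetilde C(\sigma)=\sum_k \widetilde d_{k\sigma(k)}$ in the perturbed case. Label the nodes so that the true pairing is the identity. Assumption~\ref{ass:matching-gap} then gives $d_{kk}=0$ and $d_{k\ell}\ge\gamma$ for $k\ne\ell$.

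\emph{Exact case.} We have $C(\mathrm{id})=0$. Any $\sigma\ne\mathrm{id}$ moves at least two indices, since a permutation cannot move exactly one. Hence $C(\sigma)\ge 2\gamma>0$, because every term is nonnegative and each moved index contributes at least $\gamma$. So the identity is the unique minimizer.

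\emph{Perturbed case.} Let $\sigma\ne\mathrm{id}$, let $F=\{k:\sigma(k)\ne k\}$ with $m=|F|\ge2$, and compare only over $F$, since the fixed indices contribute identical terms to both costs. On $F$, the identity has perturbed cost at most $\sum_{k\in F}\widetilde d_{kk}\le m\delta$. The permutation $\sigma$ has perturbed cost at least $\sum_{k\in F}(d_{k\sigma(k)}-\delta)\ge m(\gamma-\delta)$. Thus
$$
\widetilde C(\sigma)-\widetilde C(\mathrm{id})\ge m\gamma-2m\delta=m(\gamma-2\delta).
$$
This is positive provided $\delta<\gamma/2$.

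The hypothesis $\delta<\gamma/(r+1)$ gives $\delta<\gamma/2$ whenever $r\ge1$. For $r=1$ the matching is trivially unique. So the weaker-looking condition \eqref{eq:matching-perturbation} suffices, and the identity is the unique minimizer of $\widetilde C$.

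If the authors had in mind a cruder global estimate, it would run as follows. Bound $|\widetilde C(\sigma)-C(\sigma)|\le r\delta$ for every $\sigma$ and use $C(\sigma)\ge\gamma$. This yields $\widetilde C(\sigma)-\widetilde C(\mathrm{id})\ge\gamma-2r\delta$, which requires $\delta<\gamma/(2r)$. That is stronger than $\gamma/(r+1)$ for $r>1$, so the crude route does not reproduce the stated constant. The localized comparison over $F$ above is the route I would use, since it gives the claim directly.

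The only step needing care is restricting to the moved set $F$ so that the perturbation errors scale with $|F|$ rather than $r$. Nonnegativity of $\widetilde d_{k\ell}$ is not needed beyond ensuring that the costs are well defined.
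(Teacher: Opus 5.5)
Your proof is correct, and it takes a genuinely different route from the paper's, with a sharper conclusion.

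The paper argues globally. The true matching has perturbed cost at most $r\delta$. Any other matching contains at least one false edge, of perturbed cost at least $\gamma-\delta$. The nonnegativity hypothesis on $\widetilde d_{k\ell}$ then lets the remaining edges be discarded, so a false matching costs at least $\gamma-\delta$. Requiring $r\delta<\gamma-\delta$ is exactly where the constant $\gamma/(r+1)$ comes from.

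So your guess that the authors used the cruder estimate $|\widetilde C(\sigma)-C(\sigma)|\le r\delta$ is not what happens. The stated constant is reproduced exactly, and nonnegativity is the hypothesis that makes it work.

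Your localized comparison over the moved set $F$ instead pairs each false edge with the true edge it displaces. The error terms then scale with $|F|$ on both sides, giving the $r$-independent threshold $\delta<\gamma/2$ without using nonnegativity. Since $\gamma/(r+1)\le\gamma/2$, this implies the proposition.

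At the level of cost matrices satisfying the gap assumption, your threshold cannot be improved. For $r=2$ with $\widetilde d_{11}=\widetilde d_{22}=\delta$ and $\widetilde d_{12}=\widetilde d_{21}=\gamma-\delta$, the two matchings tie exactly at $\delta=\gamma/2$.

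The paper's route buys a one-line argument that discards everything except a single false edge. Yours buys a dimension-free perturbation margin, which is the more useful statement when the number of matched components is large.
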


\begin{proof}
The exact true matching has total cost zero, while every other matching contains a false edge and hence has cost at least $\gamma$.  Under perturbation, the true matching has cost at most $r\delta$.  Any false matching contains an edge of cost at least $\gamma-\delta$, and all remaining costs are nonnegative.  Condition \eqref{eq:matching-perturbation} gives $r\delta<\gamma-\delta$.
\end{proof}

\begin{theorem}[Exact multiscale recovery]\label{thm:exact-multiscale}
Let
$$
f(x)=\sum_{\ell=1}^{r}a_\ell x^{\alpha_\ell}
$$
have nonzero amplitudes and pairwise distinct exponents.  Choose two scales $q_1,q_2\in(0,1)$ and collect $2r$ consecutive samples on each geometric grid with the same base point $x_0$.  Assume that
\begin{enumerate}[label=(\roman*)]
\item the nodes $q_j^{\alpha_\ell}$ are pairwise distinct for each $j=1,2$;
\item $\log q_1/\log q_2\notin\mathbb Q$;
\item either the effective weights $w_\ell=a_\ell x_0^{\alpha_\ell}$ are pairwise distinct, or there is an admissible set $\Omega$ containing the exponents for which Assumption~\ref{ass:matching-gap} holds.
\end{enumerate}
Then the representation is uniquely determined by the two data sets, up to permutation of its components.  Its minimal model order is $r$, and all $\alpha_\ell$ and $a_\ell$ are uniquely determined.
\end{theorem}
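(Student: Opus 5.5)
The proof proceeds in three stages: single-scale Prony recovery on each grid, cross-scale pairing, and de-aliasing via Theorem~\ref{thm:multi-alias}.

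\emph{Stage 1: single-scale recovery.} On grid $j$, the $2r$ samples $\mu_0^{(j)},\ldots,\mu_{2r-1}^{(j)}$ fill the blocks $H_{r,0}^{(q_j)}$ and $H_{r,1}^{(q_j)}$. By hypothesis (i) and $w_\ell\ne0$, Proposition~\ref{prop:scale-factorization} applies and gives $\det H_{r,0}^{(q_j)}\ne0$. By \eqref{eq:psi-prony-relation}, the zeros of $\Psi_{q_j}$ are then exactly the nodes $\rho_{\ell,j}$, so the node set is determined by the data. Once the nodes are known, the weights are the unique solution of the Vandermonde system $\mu_n^{(j)}=\sum_\ell w_\ell\rho_{\ell,j}^n$, $n=0,\ldots,r-1$, which is invertible because the nodes are distinct. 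This yields the unordered set $\mathcal P_j$ of \eqref{eq:unordered-pairs}.

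For minimality, suppose another representation of order $r'<r$ reproduced the data. Its Hankel factorization would give $\rank H_{r,0}^{(q_j)}\le r'<r$, contradicting the nonsingularity just established. The same argument shows that any competing representation of order $r$ with distinct nodes and nonzero weights yields the same $\mathcal P_j$. This is because the pencil built from the $2r$ samples depends only on the data.

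\emph{Stage 2: pairing.} If the weights are pairwise distinct, match the element of $\mathcal P_1$ carrying $w_\ell$ with the unique element of $\mathcal P_2$ carrying $w_\ell$. Otherwise, invoke Proposition~\ref{prop:compat-matching} with $\delta=0$: the true pairing is the unique minimum-cost bipartite matching for the costs $d_\Omega$, and these costs are computable from the recovered node sets. In either case each $\ell$ receives a well-defined pair $(\rho_{\ell,1},\rho_{\ell,2})=\Phi_{\bm h}(\alpha_\ell)$, together with its weight $w_\ell$.

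\emph{Stage 3: de-aliasing.} By Theorem~\ref{thm:multi-alias}(i) and hypothesis (ii), $\Phi_{\bm h}$ is injective on $\C$, so each paired node vector determines a unique $\alpha_\ell$. Then $a_\ell=w_\ell x_0^{-\alpha_\ell}$ is determined, with $x_0^{-\alpha_\ell}=e^{-\alpha_\ell\log x_0}$ unambiguous since $x_0>0$. For uniqueness, suppose a second admissible representation $\{(\beta_k,b_k)\}$ (order $r$, satisfying the same hypotheses) produces the same data. Stage 1 gives equal sets $\mathcal P_j$. The pairing rule, applied to these same sets, gives the same paired vectors, and injectivity forces $\{\beta_k\}=\{\alpha_\ell\}$ with matching amplitudes, up to permutation.

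\emph{Main obstacle.} The delicate point is Stage 2 in the repeated-weight case. The cost $d_\Omega$ and the gap in Assumption~\ref{ass:matching-gap} refer to the true model, so one must make sure a competitor model cannot realize a different pairing from identical $\mathcal P_1,\mathcal P_2$. To handle this, I would note that any valid model's true pairs have zero cost by construction and its exponents lie in $\Omega$. Under the gap assumption, a matching with all costs zero is unique, so the zero-cost matching is intrinsic to the data and $\Omega$ alone. A second concern is that a zero-cost pair might be attained only as an infimum. I would rule this out using injectivity: a zero-cost pair must then equal $\Phi_{\bm h}(\alpha)$ for the actual exponent, since the true pairs are exactly of this form and all false pairs have cost at least $\gamma>0$.
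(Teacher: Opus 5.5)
Your proposal is correct and follows the paper's own three-step argument: single-scale Prony recovery from the nonsingular minimal pencil, cross-scale pairing fixed by condition (iii), and de-aliasing via Theorem~\ref{thm:multi-alias}(i) followed by $a_\ell=w_\ell x_0^{-\alpha_\ell}$. Your additional handling of competing representations fills in details the paper's short proof leaves implicit, namely the rank bound excluding lower order and the need for a competitor's exponents to lie in $\Omega$ when weights repeat.
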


\begin{proof}
By (i) and the nonvanishing of the weights, each single-scale sequence has minimal Prony order $r$; its $2r$ consecutive samples therefore determine the $r$ nodes and effective weights.  Condition (iii) fixes the cross-scale pairing.  Theorem~\ref{thm:multi-alias} and (ii) then give a unique exponent for each paired node vector $(q_1^{\alpha_\ell},q_2^{\alpha_\ell})$.  Finally, $a_\ell=w_\ell x_0^{-\alpha_\ell}$.
\end{proof}

For real exponents, $\alpha\mapsto q^\alpha$ is already injective at one scale, so additional scales affect resolution rather than uniqueness.  For complex exponents they can affect both.

\section{Stability, contour counting, and scale design}
\label{sec:stability}

\subsection{Hankel perturbation and scale separation}

We first separate two sources of instability: perturbation of the finite-rank Hankel structure and compression of distinct exponents under $\alpha\mapsto q^\alpha$.  Suppose
\begin{equation}\label{eq:noisy-samples}
\widetilde y_n^{(q)}=y_n^{(q)}+\eta_n^{(q)},
\qquad n=0,\ldots,2m-2,
\end{equation}
and let
\begin{equation}\label{eq:noisy-hankel-zero-shift}
\widetilde H_{m,0}^{(q)}
=H_{m,0}^{(q)}+\Delta_{m,0}^{(q)},
\qquad
(\Delta_{m,0}^{(q)})_{ij}=\eta_{i+j}^{(q)}.
\end{equation}

If
$$
\max_{0\le n\le2m-2}|\eta_n^{(q)}|\le\eps,
$$
then each entry of $\Delta_{m,0}^{(q)}$ has modulus at most $\eps$, and hence
\begin{equation}\label{eq:hankel-noise-bound}
\|\Delta_{m,0}^{(q)}\|_2
\le\|\Delta_{m,0}^{(q)}\|_F
\le m\eps.
\end{equation}
If $H_{m,0}^{(q)}$ has rank $r$ and
$$
\|\Delta_{m,0}^{(q)}\|_2
<\frac12\sigma_r(H_{m,0}^{(q)}),
$$
Weyl's singular-value inequality yields
\begin{equation}\label{eq:sv-gap}
\sigma_r(\widetilde H_{m,0}^{(q)})
>\frac12\sigma_r(H_{m,0}^{(q)}),
\qquad
\sigma_{r+1}(\widetilde H_{m,0}^{(q)})
<\frac12\sigma_r(H_{m,0}^{(q)}).
\end{equation}
Thus a perturbation smaller than half the smallest nonzero singular value preserves a visible rank gap.

Under the assumptions of Proposition~\ref{prop:scale-factorization},
\begin{equation}\label{eq:sv-lower}
\sigma_r(H_{m,0}^{(q)})
\ge \sigma_r(V_m(\rho(q)))^2\min_\ell|w_\ell|.
\end{equation}
Thus rank resolution depends on the weakest effective weight and on the smallest singular value of the Vandermonde matrix.  Clustered nodes deteriorate the latter at a power-law rate in the relevant super-resolution regimes \cite{LiLiao2021,KunisNagel2020}.

For real exponents the scale dependence is already visible in the map $\alpha\mapsto q^\alpha$.  If $A\le\alpha,\beta\le B$ and $h=\log q<0$, the mean-value theorem gives
\begin{equation}\label{eq:separation-bounds}
|\log q|q^B|\alpha-\beta|
\le |q^\alpha-q^\beta|
\le |\log q|q^A|\alpha-\beta|.
\end{equation}
Conversely, if $\rho=q^\alpha>0$, $\widehat\rho>0$, and $|\widehat\rho-\rho|\le\rho/2$, then
\begin{equation}\label{eq:alpha-error}
|\widehat\alpha-\alpha|
\le\frac{2}{|\log q|\rho}|\widehat\rho-\rho|.
\end{equation}
For $\alpha\in[A,B]$ this yields
\begin{equation}\label{eq:alpha-error-uniform}
|\widehat\alpha-\alpha|
\le\frac{2}{|\log q|q^B}|\widehat\rho-\rho|.
\end{equation}
The common factor suggests the scale surrogate
\begin{equation}\label{eq:design-surrogate}
S_B(q)=|\log q|q^B.
\end{equation}

Writing $t=-\log q>0$ gives $S_B(q)=te^{-Bt}$, so for $B>0$ its unique maximizer is
\begin{equation}\label{eq:q-star}
q_*=e^{-1/B},
\end{equation}
with maximum $1/(eB)$.  This surrogate controls only the local scalar factor in \eqref{eq:separation-bounds}--\eqref{eq:alpha-error-uniform}; it does not optimize the full Vandermonde condition number.  Sample length, dynamic range, the exponent interval, and the noise model can shift the best practical scale.

For several real scales, applying the lower bound in \eqref{eq:separation-bounds} coordinatewise gives
\begin{equation}\label{eq:joint-separation}
\|\Phi_{\bm h}(\alpha)-\Phi_{\bm h}(\beta)\|_2
\ge |\alpha-\beta|
\left(\sum_{j=1}^{s}h_j^2e^{2h_jB}\right)^{1/2},
\qquad
\alpha,\beta\in[A,B]\subset\R,
\end{equation}
where $\bm h=(h_1,\ldots,h_s)$ and $h_j<0$.

The square Hankel determinant gives a complementary description of the singular regime $q\to1$.  With $h=\log q$,
$$
e^{h\alpha_\ell}-e^{h\alpha_k}
=h(\alpha_\ell-\alpha_k)+O(h^2).
$$

\begin{corollary}[Determinant degeneration as $q\to1$]\label{cor:q-one-det}
Assume that $\alpha_1,\ldots,\alpha_r\in\mathbb C$ are pairwise distinct and $w_\ell\ne0$.  Then
\begin{equation}\label{eq:q-one-det}
\det H_{r,0}^{(q)}
=\left(\prod_{\ell=1}^{r}w_\ell\right)
(\log q)^{r(r-1)}
\prod_{k<\ell}(\alpha_\ell-\alpha_k)^2
\bigl(1+O(|\log q|)\bigr)
\end{equation}
as $q\to1$ through positive real values.
\end{corollary}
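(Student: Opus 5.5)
The proof starts from the exact determinant formula \eqref{eq:det-scale} of Proposition~\ref{prop:scale-factorization}. I will expand each Vandermonde factor $\rho_\ell(q)-\rho_k(q)$ as $q\to1$. Two preliminary points are needed.

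\emph{Hypotheses.} As $q\to1$, $h=\log q\to0$. For distinct $\alpha_\ell$ and $|h|$ small enough, $h(\alpha_\ell-\alpha_k)\notin2\pi i\mathbb Z\setminus\{0\}$, and it is also nonzero. Hence the nodes $\rho_\ell(q)=e^{h\alpha_\ell}$ are pairwise distinct by Proposition~\ref{prop:single-alias}, and \eqref{eq:det-scale} applies for all $q$ in a punctured neighbourhood of $1$.

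\emph{Weights.} The weights $w_\ell=a_\ell x_0^{\alpha_\ell}$ do not depend on $q$, so the factor $\prod_\ell w_\ell$ carries through unchanged.

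\emph{Factorization.} For each pair $k<\ell$ I write
$$
e^{h\alpha_\ell}-e^{h\alpha_k}=e^{h\alpha_k}\bigl(e^{h(\alpha_\ell-\alpha_k)}-1\bigr)=h(\alpha_\ell-\alpha_k)\,e^{h\alpha_k}\,\phi\bigl(h(\alpha_\ell-\alpha_k)\bigr),
$$
where $\phi(z)=(e^z-1)/z$ is entire with $\phi(0)=1$. Consequently $\phi(z)=1+O(|z|)$ uniformly for $|z|\le1$. In addition, $e^{h\alpha_k}=1+O(|h|)$.

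\emph{Counting powers.} Each factor therefore equals $h(\alpha_\ell-\alpha_k)(1+O(|h|))$, with an implied constant depending only on $\max_\ell|\alpha_\ell|$. Squaring and taking the product over the $r(r-1)/2$ pairs gives $h^{r(r-1)}\prod_{k<\ell}(\alpha_\ell-\alpha_k)^2$. Since the product is finite, the correction factors multiply to $(1+O(|h|))^{r(r-1)}=1+O(|h|)$. Substituting into \eqref{eq:det-scale} and using $h=\log q$ yields \eqref{eq:q-one-det}.

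The argument has no real obstacle. The only care needed is to keep the $O(|h|)$ remainders uniform in the finitely many pairs, which the bounded-$|z|$ estimate on $\phi$ provides, and to verify the distinct-node hypothesis near $q=1$, which is done above.
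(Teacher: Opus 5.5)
Your proof is correct and follows the paper's route: insert the first-order expansion of each node difference $\rho_\ell(q)-\rho_k(q)$ into the determinant formula \eqref{eq:det-scale} and count the $r(r-1)$ resulting powers of $\log q$. Your factorization through $\phi(z)=(e^z-1)/z$ and your check that the nodes stay distinct near $q=1$ make explicit two points the paper leaves implicit: the passage from the additive remainder $O((\log q)^2)$ to the multiplicative factor $1+O(|\log q|)$, which uses $\alpha_\ell\ne\alpha_k$, and the uniformity of that factor over the finitely many pairs.
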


\begin{proof}
Insert
$\rho_\ell(q)-\rho_k(q)
=(\log q)(\alpha_\ell-\alpha_k)+O((\log q)^2)$
into \eqref{eq:det-scale}.  There are $r(r-1)/2$ pairwise differences, each squared in the determinant.
\end{proof}

\subsection{Argument-principle counting and Rouch\'{e} stability}

We now return to the characteristic $\Psi_q$ in \eqref{eq:pencil-characteristic}.  The use of the minimal $r\times r$ pencil is essential: for an unreduced $m\times m$ exact Hankel pencil with $m>r$, both blocks have rank at most $r$, so its determinant vanishes identically.  In numerical work with $m>r$, contour calculations are therefore performed on the same rank-$r$ projected pencil used for generalized eigenvalue recovery.

Let $\Gamma$ be a positively oriented simple closed $C^1$ contour containing no node on its boundary.  Write $\operatorname{int}\Gamma$ for its interior.

\begin{proposition}[Contour count for the Hankel pencil]\label{thm:argument-count}
The number of nodes in $\operatorname{int}\Gamma$, counted with multiplicity, is
\begin{equation}\label{eq:argument-count}
N_q(\Gamma)
=\frac{1}{2\pi i}\int_\Gamma\frac{\Psi_q'(z)}{\Psi_q(z)}\,dz
=-\frac{1}{2\pi i}\int_\Gamma
\operatorname{tr}\!\left(A_q(z)^{-1}H_{r,0}^{(q)}\right)\,dz.
\end{equation}
\end{proposition}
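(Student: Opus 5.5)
The argument has two parts: the argument principle applied to $\Psi_q$, followed by Jacobi's formula to turn the logarithmic derivative into a trace.

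\emph{Step 1: argument principle.} By \eqref{eq:pencil-characteristic-factor}, $\Psi_q(z)=\det H_{r,0}^{(q)}\prod_{\ell}(\rho_\ell(q)-z)$ is a polynomial of degree $r$. Under the standing hypotheses of Proposition~\ref{prop:scale-factorization}, the determinant \eqref{eq:det-scale} is nonzero, so $\Psi_q\not\equiv0$. Its zeros are exactly the nodes $\rho_\ell(q)$, with multiplicities. If distinctness is dropped, the product formula still exhibits the zeros with multiplicity whenever $\det H_{r,0}^{(q)}\neq0$, and that condition is all we need. Since $\Gamma$ carries no node, $\Psi_q$ is zero-free on $\Gamma$. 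The argument principle for the entire function $\Psi_q$ then gives the first equality in \eqref{eq:argument-count}. This can also be checked directly: $\Psi_q'/\Psi_q=\sum_\ell (z-\rho_\ell(q))^{-1}$, and each term integrates to $2\pi i$ or $0$ according to whether $\rho_\ell(q)$ lies inside $\Gamma$ or not.

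\emph{Step 2: trace form.} On $\Gamma$ we have $\Psi_q(z)\neq0$, so $A_q(z)$ is invertible there. Jacobi's formula gives
\[
\Psi_q'(z)=\Psi_q(z)\operatorname{tr}\!\bigl(A_q(z)^{-1}A_q'(z)\bigr).
\]
Because $A_q(z)=H_{r,1}^{(q)}-zH_{r,0}^{(q)}$ is affine in $z$, $A_q'(z)=-H_{r,0}^{(q)}$. Hence
\[
\frac{\Psi_q'}{\Psi_q}=-\operatorname{tr}\!\bigl(A_q(z)^{-1}H_{r,0}^{(q)}\bigr),
\]
and integrating over $\Gamma$ gives the second equality.

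As a consistency check, the factorization $A_q(z)=V_rW(\Theta_q-z\Id)V_r^T$ gives
\[
A_q^{-1}H_{r,0}^{(q)}=V_r^{-T}(\Theta_q-z\Id)^{-1}V_r^T,
\]
whose trace is $\sum_\ell(\rho_\ell(q)-z)^{-1}$. This reproduces the sum from Step 1.

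There is no real obstacle here. The only point requiring care is the nondegeneracy $\det H_{r,0}^{(q)}\neq0$. It guarantees both that $\Psi_q$ is not identically zero and that $A_q(z)$ is invertible on $\Gamma$, and it is exactly why the minimal $r\times r$ pencil is used rather than an $m\times m$ one with $m>r$.
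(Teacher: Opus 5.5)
Your proposal is correct and follows the paper's proof: the argument principle applied to the factored characteristic \eqref{eq:pencil-characteristic-factor}, then Jacobi's formula with $A_q'(z)=-H_{r,0}^{(q)}$ (your trace consistency check is a nice extra). One aside should be dropped: the remark about removing distinctness is vacuous, because repeated nodes make $V_r$ singular and so force $\det H_{r,0}^{(q)}=0$.
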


\begin{proof}
The first identity is the argument principle applied to \eqref{eq:pencil-characteristic-factor} \cite{Ahlfors1979}.  Since $A_q'(z)=-H_{r,0}^{(q)}$, Jacobi's formula gives
$$
\frac{\Psi_q'(z)}{\Psi_q(z)}
=\operatorname{tr}\!\left(A_q(z)^{-1}A_q'(z)\right)
=-\operatorname{tr}\!\left(A_q(z)^{-1}H_{r,0}^{(q)}\right).
$$
\end{proof}

Suppose now that the two minimal blocks are perturbed according to
\begin{equation}\label{eq:noisy-minimal-blocks}
\widetilde H_{r,s}^{(q)}
=H_{r,s}^{(q)}+\Delta_{r,s}^{(q)},
\qquad s=0,1,
\end{equation}
and set
\begin{equation}\label{eq:noisy-characteristic}
\widetilde A_q(z)
:=\widetilde H_{r,1}^{(q)}-z\widetilde H_{r,0}^{(q)},
\qquad
\widetilde\Psi_q(z):=\det\widetilde A_q(z).
\end{equation}
For a contour disjoint from the exact nodes, define
\begin{equation}\label{eq:contour-margins}
\mu_q(\Gamma):=\min_{z\in\Gamma}|\Psi_q(z)|,
\qquad
\zeta_q(\Gamma):=\min_{z\in\Gamma}\sigma_{\min}(A_q(z)).
\end{equation}
Both quantities are positive.

\begin{theorem}[Rouch\'{e} stability of the pencil count]\label{thm:rouche-count}
If
\begin{equation}\label{eq:rouche-direct}
\sup_{z\in\Gamma}|\widetilde\Psi_q(z)-\Psi_q(z)|
<\mu_q(\Gamma),
\end{equation}
then $\widetilde\Psi_q$ and $\Psi_q$ have the same number of zeros in $\operatorname{int}\Gamma$, counted with multiplicity.  It is sufficient that
\begin{equation}\label{eq:rouche-resolvent}
\sup_{z\in\Gamma}
\left\|A_q(z)^{-1}
\bigl(\Delta_{r,1}^{(q)}-z\Delta_{r,0}^{(q)}\bigr)\right\|_2
<2^{1/r}-1.
\end{equation}
In particular, with $R_\Gamma=\max_{z\in\Gamma}|z|$, the condition
\begin{equation}\label{eq:rouche-smin}
\|\Delta_{r,1}^{(q)}\|_2
+R_\Gamma\|\Delta_{r,0}^{(q)}\|_2
<\bigl(2^{1/r}-1\bigr)\zeta_q(\Gamma)
\end{equation}
is sufficient.
\end{theorem}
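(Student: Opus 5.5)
The plan has three parts: prove the first assertion by the symmetric Rouch\'{e} theorem, reduce the second to a determinant perturbation estimate, and obtain the third from the second by a norm bound. For the first assertion, note that $\Psi_q$ and $\widetilde\Psi_q$ are polynomials in $z$ of degree at most $r$, since they are determinants of $r\times r$ matrices affine in $z$. Both are therefore entire. Condition \eqref{eq:rouche-direct} gives, for every $z\in\Gamma$,
$$
|\widetilde\Psi_q(z)-\Psi_q(z)|<\mu_q(\Gamma)\le|\Psi_q(z)|.
$$
Rouch\'{e}'s theorem then yields equal zero counts in $\operatorname{int}\Gamma$, counted with multiplicity. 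This also shows $\widetilde\Psi_q\ne0$ on $\Gamma$, so the perturbed count is well defined and can be computed by \eqref{eq:argument-count} with tildes.

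For the resolvent condition, fix $z\in\Gamma$, where $A_q(z)$ is invertible. Set $E(z)=A_q(z)^{-1}(\Delta_{r,1}^{(q)}-z\Delta_{r,0}^{(q)})$, so that $\widetilde A_q(z)=A_q(z)(\Id+E(z))$. Then
$$
\widetilde\Psi_q(z)-\Psi_q(z)=\Psi_q(z)\bigl(\det(\Id+E(z))-1\bigr).
$$
The key lemma is $|\det(\Id+E)-1|\le(1+\|E\|_2)^r-1$ for any $r\times r$ matrix $E$. To prove it, write $\det(\Id+E)=\prod_{i=1}^r(1+\lambda_i)$ over the eigenvalues of $E$ and expand the product. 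This gives $\det(\Id+E)-1=\sum_{\emptyset\ne S}\prod_{i\in S}\lambda_i$, which equals $\sum_{k\ge1}e_k(\lambda)$ in terms of elementary symmetric functions. Since $|\lambda_i|\le\|E\|_2$, each $|e_k(\lambda)|\le\binom rk\|E\|_2^k$, and summing over $k\ge1$ gives the bound. Under \eqref{eq:rouche-resolvent}, $\sup_\Gamma\|E\|_2<2^{1/r}-1$, and $\Gamma$ is compact, so $(1+\|E\|_2)^r-1<1$ uniformly on $\Gamma$. Hence $|\widetilde\Psi_q-\Psi_q|<|\Psi_q|$ pointwise on $\Gamma$. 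This is exactly the pointwise form of the Rouch\'{e} hypothesis, which is all the argument needs. The uniform bound $\mu_q(\Gamma)$ of \eqref{eq:rouche-direct} does not follow literally, so I will state that the sufficiency is via the pointwise Rouch\'{e} inequality.

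For the final claim, use $\|A_q(z)^{-1}\|_2=1/\sigma_{\min}(A_q(z))\le1/\zeta_q(\Gamma)$ on $\Gamma$. Combined with $\|\Delta_{r,1}^{(q)}-z\Delta_{r,0}^{(q)}\|_2\le\|\Delta_{r,1}^{(q)}\|_2+R_\Gamma\|\Delta_{r,0}^{(q)}\|_2$, this shows \eqref{eq:rouche-smin} implies \eqref{eq:rouche-resolvent}.

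The main obstacle is the determinant perturbation lemma and keeping the constant $2^{1/r}-1$ sharp enough. I would use the eigenvalue expansion above. A fallback is the multilinear column expansion with Hadamard's inequality, which bounds each minor by $\|E\|_2^k$ and gives the same $\binom rk$ count.
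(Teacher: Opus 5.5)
Your proposal is correct and follows the paper's proof essentially step for step: the factorization $\widetilde A_q(z)=A_q(z)(\Id+X(z))$, the bound $|\det(\Id+X)-1|\le(1+\|X\|_2)^r-1$, and the final $1/\sigma_{\min}(A_q(z))$ estimate all match. Your eigenvalue and elementary-symmetric-function argument is the paper's principal-minor expansion in other words, since $e_k(\lambda)$ is the sum of the $k\times k$ principal minors. Your remark that the resolvent condition yields only the pointwise inequality $|\widetilde\Psi_q-\Psi_q|<|\Psi_q|$ on $\Gamma$, not the uniform condition \eqref{eq:rouche-direct}, is accurate, and that pointwise form is also what the paper's argument implicitly uses.
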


\begin{proof}
Condition \eqref{eq:rouche-direct} is Rouch\'{e}'s criterion \cite{Ahlfors1979}.  For the resolvent condition, write
$$
\widetilde A_q(z)=A_q(z)(I+X(z)),
\qquad
X(z)=A_q(z)^{-1}
\bigl(\Delta_{r,1}^{(q)}-z\Delta_{r,0}^{(q)}\bigr).
$$
Then
$$
\widetilde\Psi_q(z)-\Psi_q(z)
=\Psi_q(z)\bigl(\det(I+X(z))-1\bigr).
$$
Expansion into principal minors gives
$$
|\det(I+X)-1|
\le(1+\|X\|_2)^r-1.
$$
The right-hand side is smaller than one under \eqref{eq:rouche-resolvent}.  Finally,
$$
\|X(z)\|_2
\le
\frac{\|\Delta_{r,1}^{(q)}\|_2
+|z|\|\Delta_{r,0}^{(q)}\|_2}
{\sigma_{\min}(A_q(z))},
$$
which yields \eqref{eq:rouche-smin}.
\end{proof}

If the samples satisfy
$$
\max_{0\le n\le2r-1}|\eta_n^{(q)}|\le\eps,
$$
then both perturbation blocks in \eqref{eq:noisy-minimal-blocks} have norm at most $r\eps$.  Hence
\begin{equation}\label{eq:rouche-sample-bound}
r\eps(1+R_\Gamma)
<\bigl(2^{1/r}-1\bigr)\zeta_q(\Gamma)
\end{equation}
is sufficient for contour-count stability.  The quantity $\zeta_q(\Gamma)$ in this condition depends on the exact pencil.  For computation it is preferable to reverse the perturbation argument and use the observed pencil as the reference.

\begin{corollary}[A posteriori contour certificate]\label{cor:rouche-aposteriori}
Define the observable boundary margin
\begin{equation}\label{eq:noisy-zeta}
\widetilde\zeta_q(\Gamma)
:=\min_{z\in\Gamma}
\sigma_{\min}\bigl(\widetilde A_q(z)\bigr).
\end{equation}
If $\max_{0\le n\le2r-1}|\eta_n^{(q)}|\le\eps$ and
\begin{equation}\label{eq:rouche-aposteriori}
r\eps(1+R_\Gamma)
<\bigl(2^{1/r}-1\bigr)\widetilde\zeta_q(\Gamma),
\end{equation}
then the exact characteristic $\Psi_q$ and the observed characteristic $\widetilde\Psi_q$ have the same number of zeros in $\operatorname{int}\Gamma$, counted with multiplicity.
\end{corollary}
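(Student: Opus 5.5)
The plan is to run the argument of Theorem~\ref{thm:rouche-count} with the roles of the exact and observed pencils exchanged. The relation \eqref{eq:noisy-minimal-blocks} gives
$$
A_q(z)=\widetilde A_q(z)-\bigl(\Delta_{r,1}^{(q)}-z\Delta_{r,0}^{(q)}\bigr).
$$
So the exact pencil is a perturbation of the observed one by $-(\Delta_{r,1}^{(q)}-z\Delta_{r,0}^{(q)})$. The proof of Theorem~\ref{thm:rouche-count} used only three facts about the reference pencil: it is a polynomial pencil whose determinant is analytic, it is invertible on $\Gamma$, and its smallest singular value on $\Gamma$ is bounded below. It did not use the Vandermonde structure. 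I will therefore check these properties for $\widetilde A_q$ and then repeat the estimates verbatim.

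First, \eqref{eq:rouche-aposteriori} forces $\widetilde\zeta_q(\Gamma)>0$, since the left side is nonnegative. Hence $\widetilde A_q(z)$ is invertible for every $z\in\Gamma$, and $\widetilde\Psi_q$ has no zeros on $\Gamma$; this is needed for the count inside $\Gamma$ to be well defined. Next, for $z\in\Gamma$ I factor
$$
A_q(z)=\widetilde A_q(z)\bigl(I-\widetilde X(z)\bigr),
\qquad
\widetilde X(z)=\widetilde A_q(z)^{-1}\bigl(\Delta_{r,1}^{(q)}-z\Delta_{r,0}^{(q)}\bigr),
$$
so that $\Psi_q(z)-\widetilde\Psi_q(z)=\widetilde\Psi_q(z)\bigl(\det(I-\widetilde X(z))-1\bigr)$. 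The principal-minor bound from the proof of Theorem~\ref{thm:rouche-count} gives
$$
|\det(I-\widetilde X)-1|\le(1+\|\widetilde X\|_2)^r-1 .
$$
The sample bound gives entrywise control of the Hankel perturbations, as in \eqref{eq:hankel-noise-bound}, so $\|\Delta_{r,s}^{(q)}\|_2\le r\eps$ for $s=0,1$. Combined with $|z|\le R_\Gamma$, this yields
$$
\|\widetilde X(z)\|_2\le\frac{r\eps(1+R_\Gamma)}{\sigma_{\min}(\widetilde A_q(z))}\le\frac{r\eps(1+R_\Gamma)}{\widetilde\zeta_q(\Gamma)}<2^{1/r}-1 .
$$
Consequently $|\Psi_q(z)-\widetilde\Psi_q(z)|<|\widetilde\Psi_q(z)|$ on $\Gamma$.

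Both determinants are polynomials in $z$, hence entire. Rouch\'e's theorem \cite{Ahlfors1979} then shows that $\Psi_q$ and $\widetilde\Psi_q$ have the same number of zeros in $\operatorname{int}\Gamma$, counted with multiplicity. The strict inequality also rules out zeros of $\Psi_q$ on $\Gamma$.

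The main obstacle is that the reference pencil is now the noisy one. I must confirm that nothing in the earlier proof relied on $\Psi_q$ having the factorized form \eqref{eq:pencil-characteristic-factor}, or on $\widetilde\Psi_q$ being nondegenerate beyond what is shown on $\Gamma$. The pointwise estimate on $\Gamma$ needs only invertibility there, and Rouch\'e's theorem holds even if $\widetilde\Psi_q$ were a low-degree or constant polynomial. So the key point is the positivity of $\widetilde\zeta_q(\Gamma)$ extracted from \eqref{eq:rouche-aposteriori}. A minor detail is the entrywise-to-spectral norm bound, which uses only the samples $\eta_0,\dots,\eta_{2r-1}$ entering the two $r\times r$ blocks.
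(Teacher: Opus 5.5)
Your proposal is correct and follows the paper's proof: take $\widetilde A_q(z)$ as the reference pencil, factor $A_q(z)=\widetilde A_q(z)\bigl(\Id-\widetilde X(z)\bigr)$, bound $\|\widetilde X(z)\|_2\le r\eps(1+R_\Gamma)/\widetilde\zeta_q(\Gamma)$, and reuse the principal-minor estimate to obtain the Rouch\'{e} inequality. Your extra checks make explicit steps the paper leaves implicit: that \eqref{eq:rouche-aposteriori} forces $\widetilde\zeta_q(\Gamma)>0$, that the strict inequality excludes zeros of either characteristic on $\Gamma$, and that only $\eta_0,\dots,\eta_{2r-1}$ enter the blocks.
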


\begin{proof}
Write
$$
A_q(z)=\widetilde A_q(z)-
\bigl(\Delta_{r,1}^{(q)}-z\Delta_{r,0}^{(q)}\bigr)
$$
and repeat the determinant perturbation argument in the proof of Theorem~\ref{thm:rouche-count}, now with $\widetilde A_q(z)$ as the reference matrix.  Since
$$
\left\|\widetilde A_q(z)^{-1}
\bigl(\Delta_{r,1}^{(q)}-z\Delta_{r,0}^{(q)}\bigr)\right\|_2
\le
\frac{r\eps(1+R_\Gamma)}{\widetilde\zeta_q(\Gamma)},
$$
condition \eqref{eq:rouche-aposteriori} gives the required Rouch\'{e} inequality.
\end{proof}

For numerical certification, the continuous boundary minimum in \eqref{eq:noisy-zeta} can be bounded from finitely many samples.  Let $z_1,\ldots,z_M\in\Gamma$ and define the fill distance
$$
h_M:=\sup_{z\in\Gamma}\min_{1\le k\le M}|z-z_k|.
$$

\begin{lemma}\label{lem:finite-boundary-control}
Set
\begin{equation}\label{eq:discrete-zeta-lower}
\underline\zeta_{q,M}(\Gamma)
:=
\min_{1\le k\le M}
\sigma_{\min}\bigl(\widetilde A_q(z_k)\bigr)
-h_M\|\widetilde H_{r,0}^{(q)}\|_2.
\end{equation}
Then
$$
\widetilde\zeta_q(\Gamma)
\ge
\underline\zeta_{q,M}(\Gamma).
$$
Consequently, if $\underline\zeta_{q,M}(\Gamma)>0$ and
\begin{equation}\label{eq:finite-grid-certificate}
r\eps(1+R_\Gamma)
<
\bigl(2^{1/r}-1\bigr)\underline\zeta_{q,M}(\Gamma),
\end{equation}
the exact and observed characteristics have the same zero count in $\operatorname{int}\Gamma$.

Moreover, let $\gamma$ be a $C^1$ subarc of $\Gamma$ of length $L_\gamma$ on which
$\sigma_{\min}(\widetilde A_q(z))\ge\zeta_\gamma>0$.  Then
\begin{equation}\label{eq:phase-variation-bound}
\left|\Delta_\gamma\arg\widetilde\Psi_q\right|
\le
\frac{r\|\widetilde H_{r,0}^{(q)}\|_2}{\zeta_\gamma}\,L_\gamma.
\end{equation}
\end{lemma}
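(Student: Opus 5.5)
The argument has three parts: the Lipschitz bound for $\sigma_{\min}$, the certificate obtained from Corollary~\ref{cor:rouche-aposteriori}, and the phase bound via Jacobi's formula.

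\textbf{Lipschitz bound.} The map $z\mapsto\sigma_{\min}(\widetilde A_q(z))$ is Lipschitz with constant $\|\widetilde H_{r,0}^{(q)}\|_2$. Indeed, $\widetilde A_q(z)-\widetilde A_q(w)=-(z-w)\widetilde H_{r,0}^{(q)}$. Weyl's singular-value inequality, already used for \eqref{eq:sv-gap}, then gives
$$
\bigl|\sigma_{\min}(\widetilde A_q(z))-\sigma_{\min}(\widetilde A_q(w))\bigr|\le|z-w|\,\|\widetilde H_{r,0}^{(q)}\|_2 .
$$
Fix $z\in\Gamma$. By the definition of $h_M$ (with the minimum over $k$ attained, since the set is finite) there is a node $z_k$ with $|z-z_k|\le h_M$. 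Hence
$$
\sigma_{\min}(\widetilde A_q(z))\ge\sigma_{\min}(\widetilde A_q(z_k))-h_M\|\widetilde H_{r,0}^{(q)}\|_2\ge\underline\zeta_{q,M}(\Gamma).
$$
Taking the minimum over $z\in\Gamma$ yields $\widetilde\zeta_q(\Gamma)\ge\underline\zeta_{q,M}(\Gamma)$.

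\textbf{Certificate.} Suppose $\underline\zeta_{q,M}(\Gamma)>0$ and \eqref{eq:finite-grid-certificate} holds. Then \eqref{eq:rouche-aposteriori} holds, because $\widetilde\zeta_q(\Gamma)\ge\underline\zeta_{q,M}(\Gamma)$. Corollary~\ref{cor:rouche-aposteriori} then gives equality of the zero counts. The positivity of $\underline\zeta_{q,M}(\Gamma)$ also ensures that $\widetilde A_q$ is invertible on $\Gamma$, so $\widetilde\Psi_q$ has no zeros on the contour.

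\textbf{Phase bound.} On $\gamma$ we have $\widetilde\Psi_q\ne0$. Parametrize $\gamma$ by arc length. Along the arc,
$$
\Delta_\gamma\arg\widetilde\Psi_q=\operatorname{Im}\int_\gamma\frac{\widetilde\Psi_q'(z)}{\widetilde\Psi_q(z)}\,dz .
$$
Jacobi's formula, as in the proof of Proposition~\ref{thm:argument-count}, gives
$$
\frac{\widetilde\Psi_q'}{\widetilde\Psi_q}=-\operatorname{tr}\bigl(\widetilde A_q(z)^{-1}\widetilde H_{r,0}^{(q)}\bigr).
$$
For an $r\times r$ matrix the trace satisfies $|\operatorname{tr}B|\le r\|B\|_2$. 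Therefore
$$
\left|\frac{\widetilde\Psi_q'(z)}{\widetilde\Psi_q(z)}\right|\le r\,\|\widetilde A_q(z)^{-1}\|_2\,\|\widetilde H_{r,0}^{(q)}\|_2\le\frac{r\|\widetilde H_{r,0}^{(q)}\|_2}{\zeta_\gamma}.
$$
Integrating along $\gamma$ with $|dz|$ gives \eqref{eq:phase-variation-bound}.

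The only step that needs care is the Lipschitz estimate for $\sigma_{\min}$, and it follows directly from Weyl's inequality.
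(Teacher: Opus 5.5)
Your proposal is correct and follows the paper's proof essentially step for step. You use the $1$-Lipschitz dependence of $\sigma_{\min}$ on the matrix (your Weyl inequality) on $\widetilde A_q(z)-\widetilde A_q(z_k)=-(z-z_k)\widetilde H_{r,0}^{(q)}$ at the nearest boundary sample, appeal directly to Corollary~\ref{cor:rouche-aposteriori} for the certificate, and then bound the phase by Jacobi's formula with $|\operatorname{tr}B|\le r\|B\|_2$ and $\|\widetilde A_q(z)^{-1}\|_2\le 1/\zeta_\gamma$, integrated along $\gamma$.
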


\begin{proof}
The smallest singular value is $1$-Lipschitz with respect to the operator norm.  Since
$$
\widetilde A_q(z)-\widetilde A_q(z_k)
=-(z-z_k)\widetilde H_{r,0}^{(q)},
$$
taking the nearest boundary sample gives \eqref{eq:discrete-zeta-lower}.  The certificate then follows from Corollary~\ref{cor:rouche-aposteriori}.  For the phase bound, Jacobi's formula gives
$$
\frac{d}{dz}\log\widetilde\Psi_q(z)
=
-\operatorname{tr}\!\left(
\widetilde A_q(z)^{-1}\widetilde H_{r,0}^{(q)}
\right).
$$
Integration along $\gamma$, together with
$|\operatorname{tr}B|\le r\|B\|_2$, yields \eqref{eq:phase-variation-bound}.
\end{proof}

The same result gives local information.  Let
$$
\delta_\ell(q):=\min_{k\ne\ell}|\rho_\ell(q)-\rho_k(q)|
$$
and choose $0<d<\delta_\ell(q)$.  On
$$
\Gamma_\ell(d)=\{z:|z-\rho_\ell(q)|=d\},
$$
formula \eqref{eq:pencil-characteristic-factor} yields
\begin{equation}\label{eq:local-det-margin}
\mu_q(\Gamma_\ell(d))
\ge
|\det H_{r,0}^{(q)}|\,d
\prod_{k\ne\ell}
\bigl(|\rho_\ell(q)-\rho_k(q)|-d\bigr).
\end{equation}

\begin{corollary}\label{cor:single-node-rouche}
If Theorem~\ref{thm:rouche-count} applies on $\Gamma_\ell(d)$, then $\widetilde\Psi_q$ has exactly one zero $\widehat\rho_\ell$ in $|z-\rho_\ell(q)|<d$.  Thus
$$
|\widehat\rho_\ell-\rho_\ell(q)|<d.
$$
If also $d<|\rho_\ell(q)|$ and $q^{\widehat\alpha_\ell}=\widehat\rho_\ell$, then for some $k\in\mathbb Z$,
\begin{equation}\label{eq:local-alpha-rouche}
\left|
\widehat\alpha_\ell-\alpha_\ell-
\frac{2\pi i k}{\log q}
\right|
\le
\frac{d}{|\log q|\bigl(|\rho_\ell(q)|-d\bigr)}.
\end{equation}
\end{corollary}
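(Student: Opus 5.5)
The plan has two parts, the counting statement and the exponent bound.

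\emph{Counting.} Apply Theorem~\ref{thm:rouche-count} on the circle $\Gamma_\ell(d)$. This contour is admissible: it is a positively oriented simple closed $C^1$ curve, and it contains no node. Indeed, $d>0$ keeps $\rho_\ell(q)$ off the boundary. For $k\ne\ell$ we have $|\rho_k(q)-\rho_\ell(q)|\ge\delta_\ell(q)>d$, so every other node lies strictly outside the closed disc.

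By \eqref{eq:pencil-characteristic-factor}, $\Psi_q$ is $\det H_{r,0}^{(q)}$ times $\prod_k(\rho_k(q)-z)$. The determinant is nonzero by Proposition~\ref{prop:scale-factorization}. Hence the zeros of $\Psi_q$ are exactly the nodes, each simple, and precisely one of them, $\rho_\ell(q)$, lies in $|z-\rho_\ell(q)|<d$. Theorem~\ref{thm:rouche-count} then says $\widetilde\Psi_q$ has exactly one zero there, counted with multiplicity. Call it $\widehat\rho_\ell$; it satisfies $|\widehat\rho_\ell-\rho_\ell(q)|<d$.

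Rouché also implies $\widetilde\Psi_q\not\equiv0$. This matters, because otherwise the notion of a single zero would be meaningless. It holds since $|\widetilde\Psi_q|\ge|\Psi_q|-|\widetilde\Psi_q-\Psi_q|>0$ on $\Gamma$.

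\emph{Exponent bound.} Since $d<|\rho_\ell(q)|$, the disc $|z-\rho_\ell(q)|\le d$ avoids $0$. So $\widehat\rho_\ell\ne0$, and a logarithm of the ratio is available. Set $w=\widehat\rho_\ell/\rho_\ell(q)$. Then $|w-1|<d/|\rho_\ell(q)|<1$.

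Take the principal branch $\Log w=\int_{[1,w]}d\zeta/\zeta$, integrating along the segment from $1$ to $w$. On that segment $|\zeta|\ge 1-|w-1|$, which gives
$$
|\Log w|\le\frac{|w-1|}{1-|w-1|}\le\frac{d}{|\rho_\ell(q)|-d}.
$$

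Next, $q^{\widehat\alpha_\ell}=\widehat\rho_\ell$ and $q^{\alpha_\ell}=\rho_\ell(q)$ give $e^{h(\widehat\alpha_\ell-\alpha_\ell)}=w=e^{\Log w}$, where $h=\log q$. By Proposition~\ref{prop:single-alias}, or directly by periodicity of the exponential, $h(\widehat\alpha_\ell-\alpha_\ell)-\Log w=2\pi i k$ for some $k\in\mathbb Z$. Dividing by $h$ and taking moduli yields \eqref{eq:local-alpha-rouche}.

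\emph{Main obstacle.} The only delicate step is the logarithm estimate. One must use the elementary bound with the segment integral rather than the mean-value form \eqref{eq:alpha-error}, which was stated only for positive real nodes. The remaining steps are bookkeeping: checking that the contour is admissible and that the zeros are simple.
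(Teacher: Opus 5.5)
Your proof is correct and follows the paper's argument. Rouch\'{e}'s theorem on $\Gamma_\ell(d)$ preserves the single simple zero, and the exponent bound comes from integrating $1/\zeta$ along a segment that stays bounded away from the origin, with the $2\pi i k/\log q$ term coming from the choice of logarithm branch. Your normalization $w=\widehat\rho_\ell/\rho_\ell(q)$ is a rescaled form of the paper's integration along the segment from $\rho_\ell(q)$ to $\widehat\rho_\ell$ and gives the same bound; your extra checks (admissibility of the contour and $\widetilde\Psi_q\not\equiv0$) are details the paper leaves implicit.
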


\begin{proof}
The exact disk contains only $\rho_\ell(q)$, and the Rouch\'{e} condition preserves its zero count.  If $d<|\rho_\ell(q)|$, the disk avoids the origin and carries an analytic logarithm.  Integrating $1/z$ along the segment from $\rho_\ell(q)$ to $\widehat\rho_\ell$ gives the bound; different logarithm branches differ by $2\pi ik$.
\end{proof}

A contour may also surround a whole cluster.  If it encloses precisely the nodes indexed by a set $S$, Theorem~\ref{thm:rouche-count} preserves the count $|S|$ even when no disjoint family of single-node contours satisfies the perturbation criterion.  The contour count therefore remains informative after pointwise localization has broken down.

Finally, the determinant margin inherits the singular behavior of \eqref{eq:q-one-det}.  Let $\Gamma$ be fixed, contain $1$ in its interior, and put
$$
d_\Gamma=\min_{z\in\Gamma}|1-z|>0.
$$

\begin{corollary}[Contour-margin degeneration as $q\to1$]\label{cor:rouche-margin-q1}
Under the assumptions of Corollary~\ref{cor:q-one-det},
\begin{equation}\label{eq:rouche-margin-q1}
\mu_q(\Gamma)
=C_\Gamma|\log q|^{r(r-1)}(1+o(1)),
\qquad q\to1,
\end{equation}
where
\begin{equation}\label{eq:rouche-margin-constant}
C_\Gamma
=\left|\prod_{\ell=1}^{r}w_\ell\right|
\prod_{k<\ell}|\alpha_\ell-\alpha_k|^2\,d_\Gamma^r.
\end{equation}
\end{corollary}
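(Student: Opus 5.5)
The plan is to use the exact factorization \eqref{eq:pencil-characteristic-factor},
$$
|\Psi_q(z)|=|\det H_{r,0}^{(q)}|\prod_{\ell=1}^{r}|\rho_\ell(q)-z|,
$$
and treat its two factors separately. The determinant factor is already handled by Corollary~\ref{cor:q-one-det}. Taking absolute values there gives
$$
|\det H_{r,0}^{(q)}|=\Bigl|\prod_\ell w_\ell\Bigr|\,|\log q|^{r(r-1)}\prod_{k<\ell}|\alpha_\ell-\alpha_k|^2\bigl(1+O(|\log q|)\bigr).
$$
So everything reduces to showing that $\min_{z\in\Gamma}\prod_\ell|\rho_\ell(q)-z|=d_\Gamma^r(1+o(1))$.

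For the node product, note that $\rho_\ell(q)=e^{\alpha_\ell\log q}=1+O(|\log q|)$ for each fixed $\alpha_\ell\in\C$. For $z\in\Gamma$ I will use
$$
\bigl||\rho_\ell(q)-z|-|1-z|\bigr|\le|\rho_\ell(q)-1|=:\epsilon_\ell(q)\to0.
$$
This gives, uniformly in $z\in\Gamma$,
$$
\prod_\ell(|1-z|-\epsilon_\ell)\le\prod_\ell|\rho_\ell(q)-z|\le\prod_\ell(|1-z|+\epsilon_\ell),
$$
with $|1-z|\ge d_\Gamma>0$. Minimizing over $\Gamma$ and using the monotonicity of $t\mapsto\prod_\ell(t\pm\epsilon_\ell)$ for $t\ge d_\Gamma>\max_\ell\epsilon_\ell$, both bounds on the minimum equal $\prod_\ell(d_\Gamma\pm\epsilon_\ell)=d_\Gamma^r(1+O(|\log q|))$. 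Here the minimum of $|1-z|$ is attained because $\Gamma$ is compact. The same estimate shows that for $q$ close to $1$ no node lies on $\Gamma$, since $\epsilon_\ell<d_\Gamma$, so $\mu_q(\Gamma)$ is well defined and positive.

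The two-sided bound on the node product is the only step requiring care. The rest is routine: the minimum over $\Gamma$ of a product is sandwiched by products of minima, and the relative errors $\epsilon_\ell/d_\Gamma$ are uniform in $z$. Multiplying the two asymptotics then gives $\mu_q(\Gamma)=C_\Gamma|\log q|^{r(r-1)}(1+o(1))$, with $C_\Gamma$ as in \eqref{eq:rouche-margin-constant}; in fact the error term is $O(|\log q|)$. The hypothesis that $1\in\operatorname{int}\Gamma$ is not needed for the margin estimate; it only guarantees that all $r$ nodes eventually lie inside $\Gamma$.
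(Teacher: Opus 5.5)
Your proposal is correct and follows the paper's proof: you factor $\mu_q(\Gamma)$ via \eqref{eq:pencil-characteristic-factor} into $|\det H_{r,0}^{(q)}|$ times $\min_{z\in\Gamma}\prod_\ell|\rho_\ell(q)-z|$, take the first factor from Corollary~\ref{cor:q-one-det}, and show the second tends to $d_\Gamma^r$. Your triangle-inequality sandwich is a quantitative form of the paper's uniform-convergence step; it also gives the sharper $O(|\log q|)$ error and supports your correct remark that only $1\notin\Gamma$ is needed.
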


\begin{proof}
By \eqref{eq:pencil-characteristic-factor},
$$
\mu_q(\Gamma)
=|\det H_{r,0}^{(q)}|
\min_{z\in\Gamma}
\prod_{\ell=1}^{r}|\rho_\ell(q)-z|.
$$
Since every $\rho_\ell(q)\to1$, the product converges uniformly on $\Gamma$ to $|1-z|^r$, whose minimum is $d_\Gamma^r$.  Combining this with Corollary~\ref{cor:q-one-det} proves the claim.
\end{proof}

For real exponents, the estimates above quantify the conditioning benefit of an auxiliary scale.  For complex exponents, the scale must additionally resolve the aliasing lattice of Section~\ref{sec:identifiability}.  The reconstruction below uses both effects.
\section{Dual-channel recovery algorithm and numerical experiments}
\label{sec:algorithm}

The same Hankel data are used in two different ways.  After rank reduction, a projected pencil provides point estimates of the scale-dependent nodes.  Independently, the determinant characteristic of a minimal pencil is evaluated on prescribed contours; its winding number gives a regional node count without first locating the zeros.  The two outputs are compared only after they have been formed.

Let
\begin{equation}\label{eq:data-alg}
\widetilde y_n^{(j)}
=\sum_{\ell=1}^{r}w_\ell(q_j^{\alpha_\ell})^n+\eta_n^{(j)},
\qquad
n=0,\ldots,N_j-1,\quad j=1,\ldots,s.
\end{equation}
For each scale choose $m_j$ with $2m_j\le N_j$ and form
\begin{equation}\label{eq:hankel-alg}
\widetilde H_{m_j,0}^{(q_j)}
=\left(\widetilde y_{a+b}^{(j)}\right)_{a,b=0}^{m_j-1},
\qquad
\widetilde H_{m_j,1}^{(q_j)}
=\left(\widetilde y_{a+b+1}^{(j)}\right)_{a,b=0}^{m_j-1}.
\end{equation}
Rank selection is carried out scale by scale because the Vandermonde subspaces depend on $q_j$.  Once a common order $\widehat r$ has been selected, the point channel may use a larger Hankel window, whereas the contour channel uses the minimal $\widehat r\times\widehat r$ blocks formed from the first $2\widehat r$ samples.

\begin{algorithm}[htbp]
\caption{Dual-channel multiscale Hankel recovery}
\label{alg:dual-channel}
\begin{algorithmic}[1]
\Require Scales $q_1,\ldots,q_s$, common base point $x_0$, data $\{\widetilde y_n^{(j)}\}$, admissible exponent set $\Omega$, node-search regions $\mathcal R_j$, and, when available, deterministic noise bounds $\eps_j$.
\Ensure Model order $\widehat r$, point estimates, contour cells, resolved exponents and amplitudes, and any unresolved spectral clusters.
\For{$j=1,\ldots,s$}
  \State Form the consecutive Hankel blocks in \eqref{eq:hankel-alg}.
  \State Estimate the scale-wise numerical rank from the singular-value gap.
\EndFor
\State Combine the scale-wise ranks by the prescribed consensus rule to obtain $\widehat r$.
\For{$j=1,\ldots,s$}
  \State \textbf{Point channel:} form a rank-$\widehat r$ projected pencil, compute point estimates $\widehat\rho_{\ell,j}$, and recover $\widehat w_{\ell,j}$ by Vandermonde least squares.
  \State \textbf{Contour channel:} form the minimal observed pencil and apply Algorithm~\ref{alg:contour-localization} on $\mathcal R_j$.
  \State Associate point estimates with contour cells when possible.  A certified singleton cell establishes that exactly one true node lies in that region; a certified cell of count greater than one is retained as an unresolved cluster.
\EndFor
\State Match components that are resolved on at least one scale by amplitude proximity and the compatibility cost $d_\Omega$; use auxiliary scales to split unresolved reference-scale clusters whenever possible.
\For{each component resolved and matched across the available scales}
  \State Compute
  $$
  \widehat\alpha_\ell
  =\argmin_{\alpha\in\Omega}
  \sum_{j=1}^{s}\vartheta_j
  |e^{\alpha\log q_j}-\widehat\rho_{\ell,j}|^2.
  $$
\EndFor
\State Recover the corresponding effective amplitudes and set $\widehat a_\ell=\widehat w_\ell x_0^{-\widehat\alpha_\ell}$.
\State Return any cluster unresolved at all scales as a regional spectral object, without assigning artificial pointwise exponents.
\State Optionally refine the resolved components by variable projection or damped Gauss--Newton iteration for the original multiscale model.
\end{algorithmic}
\end{algorithm}

For ordered boundary samples $z_0,\ldots,z_{M-1}$, with $z_M=z_0$, the direct winding approximation is
\begin{equation}\label{eq:discrete-winding}
\widehat N_M(\Gamma)
=\frac{1}{2\pi}
\sum_{k=0}^{M-1}
\operatorname{Arg}
\left(
\frac{\widetilde\Psi_q(z_{k+1})}
     {\widetilde\Psi_q(z_k)}
\right).
\end{equation}
The determinant phase is evaluated through a complex log-determinant.  Boundary sampling is refined until the integer winding is stable.  For rigorous phase unwrapping, Lemma~\ref{lem:finite-boundary-control} is applied segmentwise: if a segment has length $L_k$ and a certified lower bound $\underline\zeta_k$ for $\sigma_{\min}(\widetilde A_q)$ on that segment, then refinement continues until
\begin{equation}\label{eq:phase-segment-criterion}
\frac{\widehat r\,\|\widetilde H_{\widehat r,0}^{(q)}\|_2}
     {\underline\zeta_k}L_k
<\frac{\pi}{2}
\end{equation}
for every segment.  The same boundary samples yield the finite-grid lower bound \eqref{eq:discrete-zeta-lower} used in the Rouch\'{e} test.

\begin{algorithm}[htbp]
\caption{Adaptive contour localization and Rouch\'{e} certification}
\label{alg:contour-localization}
\begin{algorithmic}[1]
\Require Minimal observed pencil $\widetilde A_q(z)$, initial rectangle $\mathcal R$, tolerance $\tau$, and, when available, a noise bound $\eps$.
\Ensure Cells $(\mathcal R_\nu,N_\nu,c_\nu)$, where $N_\nu$ is the winding count and $c_\nu$ records whether the count is certified for the exact spectrum.
\State Sample $\partial\mathcal R$ adaptively and compute the direct winding count from \eqref{eq:discrete-winding}; refine until the winding is stable and the segmentwise bound \eqref{eq:phase-segment-criterion} is satisfied.
\State Compute the finite-grid boundary lower bound \eqref{eq:discrete-zeta-lower}; when $\eps$ is known, test \eqref{eq:finite-grid-certificate}.
\If{the count is zero}
  \State discard $\mathcal R$.
\ElsIf{$\operatorname{diam}(\mathcal R)\le\tau$}
  \State return the cell with its count and certification flag.
\Else
  \State choose vertical and horizontal split lines away from small values of $\sigma_{\min}(\widetilde A_q(z))$.
  \State compute winding counts and certification flags on the nonempty child rectangles.
  \If{the certified child counts add to the certified parent count}
    \State recurse on the nonempty children.
  \Else
    \State retain the parent as a certified unresolved cluster if its certificate holds; otherwise retain it as an uncertified cell.
  \EndIf
\EndIf
\end{algorithmic}
\end{algorithm}

In the point channel, singular-value truncation is followed by a reduced pencil or ESPRIT solve rather than direct root finding of a high-degree annihilating polynomial \cite{Liao2016,KatzDiabBatenkov2024}.  The contour channel solves a different problem: it returns regional counts and, when a deterministic noise bound is available, certifies those counts against the exact spectrum.

\paragraph{Cross-scale matching and branch resolution.}
For two noisy scales we use the cost
\begin{equation}\label{eq:matching-cost-noisy}
C_{k\ell}
=\tau_w|\widehat w_{k,1}-\widehat w_{\ell,2}|^2
+\tau_\alpha d_\Omega(\widehat\rho_{k,1},\widehat\rho_{\ell,2})^2,
\end{equation}
with $\tau_w,\tau_\alpha>0$.  A contour cell of count greater than one is not assigned point labels until an auxiliary scale resolves its components.  For an individually matched node, the single-scale branch candidates are
\begin{equation}\label{eq:branch-candidates}
\alpha_{\ell,j}^{(k)}
=\frac{\Log\widehat\rho_{\ell,j}+2\pi ik}{\log q_j},
\qquad k\in\mathbb Z.
\end{equation}
A bounded admissible set $\Omega$ leaves finitely many candidates, and the joint least-squares step selects the mutually consistent branch.

\paragraph{Joint refinement.}
For
$$
\theta=(\alpha_1,\ldots,\alpha_r,w_1,\ldots,w_r),
$$
the multiscale objective is
\begin{equation}\label{eq:joint-NLS}
\mathcal J(\theta)
=\sum_{j=1}^{s}\sum_{n=0}^{N_j-1}
\left|
\widetilde y_n^{(j)}
-\sum_{\ell=1}^{r}w_\ell e^{n\alpha_\ell\log q_j}
\right|^2.
\end{equation}
For fixed exponents the amplitudes enter linearly, so variable projection removes them from the nonlinear search.  The point channel supplies the initialization; the contour cells indicate whether that initialization represents isolated nodes or only an unresolved group.

The numerical experiments focus on spectral recovery and scale effects rather than on comparing rank-selection heuristics.  The projected pencils therefore use the known value $r$; Experiment~I separately displays the singular-value gap that reveals this order in the noiseless case.

\subsection{Numerical experiment I: exact finite-rank recovery}

The first experiment checks the finite-rank factorization and the
projected-pencil implementation in a noiseless single-scale setting.
We take
$$
\alpha=(0.45,1.35,2.80),
\qquad
a=(1.20,-0.70,0.50),
$$
with $x_0=1.3$, $q=0.72$, $N=18$ samples, and a $7\times7$
Hankel matrix.  The three leading singular values are
$$
3.9050,\quad
2.9165\times10^{-1},\quad
4.0379\times10^{-2},
$$
while the remaining singular values are at machine precision.
As shown in Figure~\ref{fig:exp1-rank}, this produces a sharp
singular-value gap after the third mode and identifies the numerical
Hankel rank as $r=3$.  The projected pencil then recovers
$$
\widehat\alpha=(0.45,1.35,2.80)
$$
with maximum exponent error $9.55\times10^{-14}$.

\begin{figure}[htbp]
\centering
\includegraphics[width=0.68\textwidth]{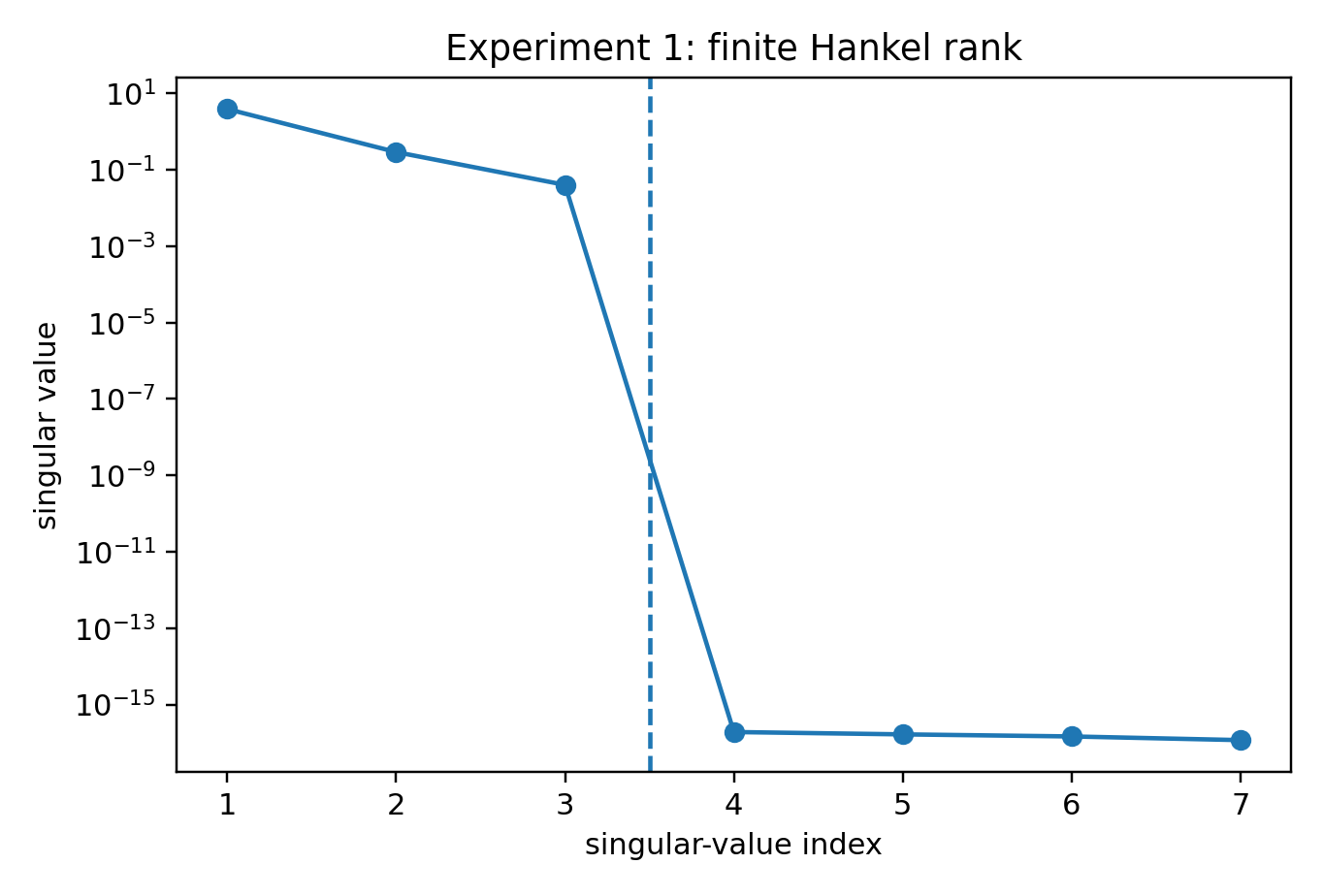}
\caption{Singular values of the $7\times7$ geometric Hankel matrix
in the exact three-mode experiment.  The sharp drop after the third
singular value separates the three-dimensional signal subspace from
the numerical null space; the dashed line marks the corresponding
rank cutoff.}
\label{fig:exp1-rank}
\end{figure}

\subsection{Numerical experiment II: degeneration as \texorpdfstring{$q\to1$}{q to 1} and scale selection}

To test Corollary~\ref{cor:q-one-det}, we choose four real exponents
$$
\alpha=(0.4,1.2,2.3,3.7)
$$
and effective amplitudes
$$
w=(1.1,0.9,-0.6,0.4).
$$
The predicted degeneration exponent is $r(r-1)=12$.  Because ordinary
double precision suffers severe cancellation in this regime, the determinants
are evaluated with 80-digit arithmetic for $q\in[0.999,0.99999]$.
A least-squares fit of $\log|\det H_{r,0}^{(q)}|$ against $\log|\log q|$
gives the slope
$$
11.994624,
$$
which differs from the theoretical value by approximately
$5.4\times10^{-3}$.  As shown in Figure~\ref{fig:exp2-det}, the computed
determinants lie nearly on a straight line in log--log coordinates, in
agreement with the predicted power law.

\begin{figure}[htbp]
\centering
\includegraphics[width=0.68\textwidth]{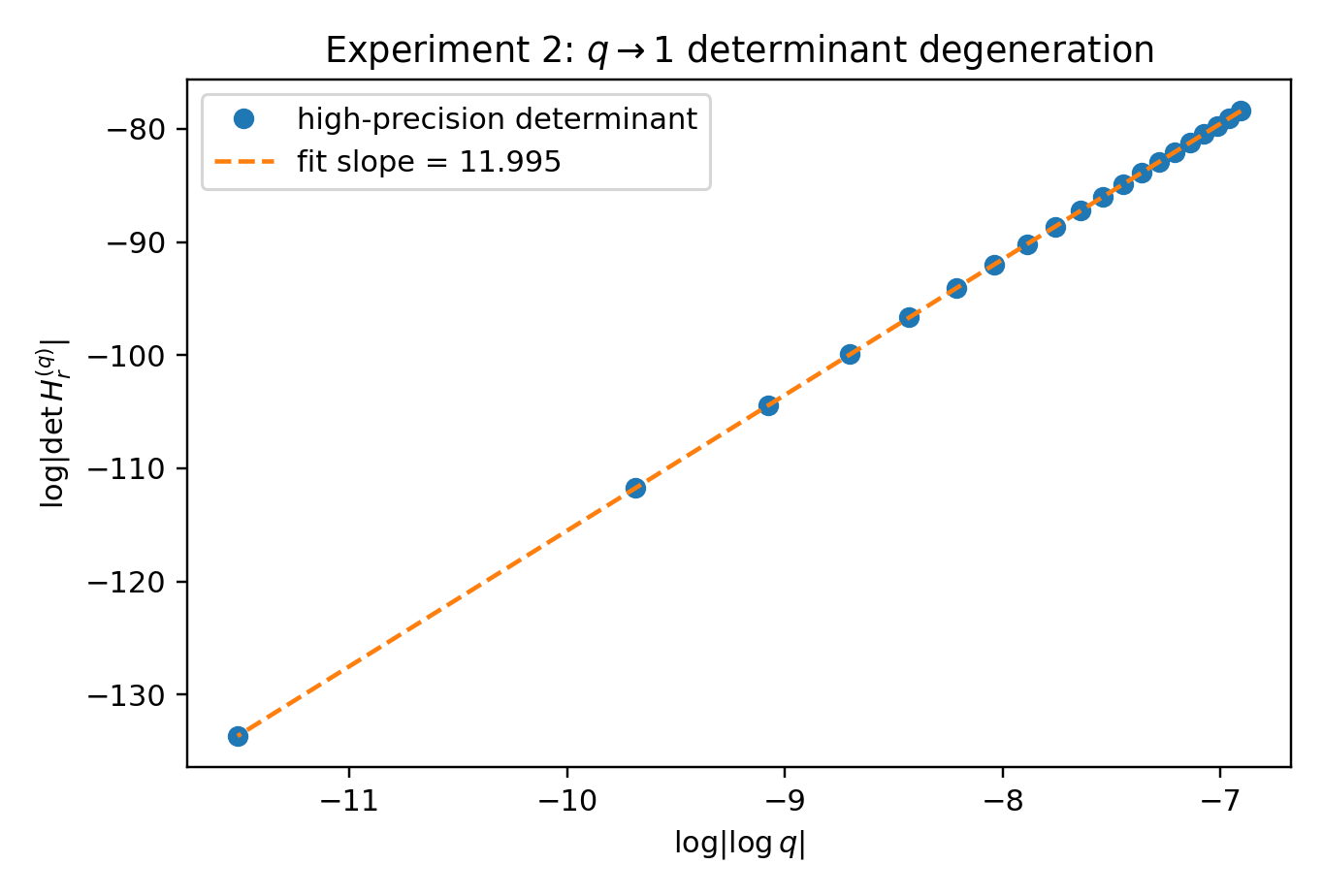}
\caption{Log--log verification of the determinant degeneration law as
$q\to1$.  The fitted slope is $11.995$, close to the theoretical value
$r(r-1)=12$.}
\label{fig:exp2-det}
\end{figure}

For comparison with the determinant calculation, Figure~\ref{fig:exp2-scale}
displays the first-order scale surrogate $|\log q|q^B$ with $B=4$.
The analytic maximizer \eqref{eq:q-star} is
$q_*=e^{-1/4}=0.778801$, while the grid maximizer is $0.779419$.

\begin{figure}[htbp]
\centering
\includegraphics[width=0.68\textwidth]{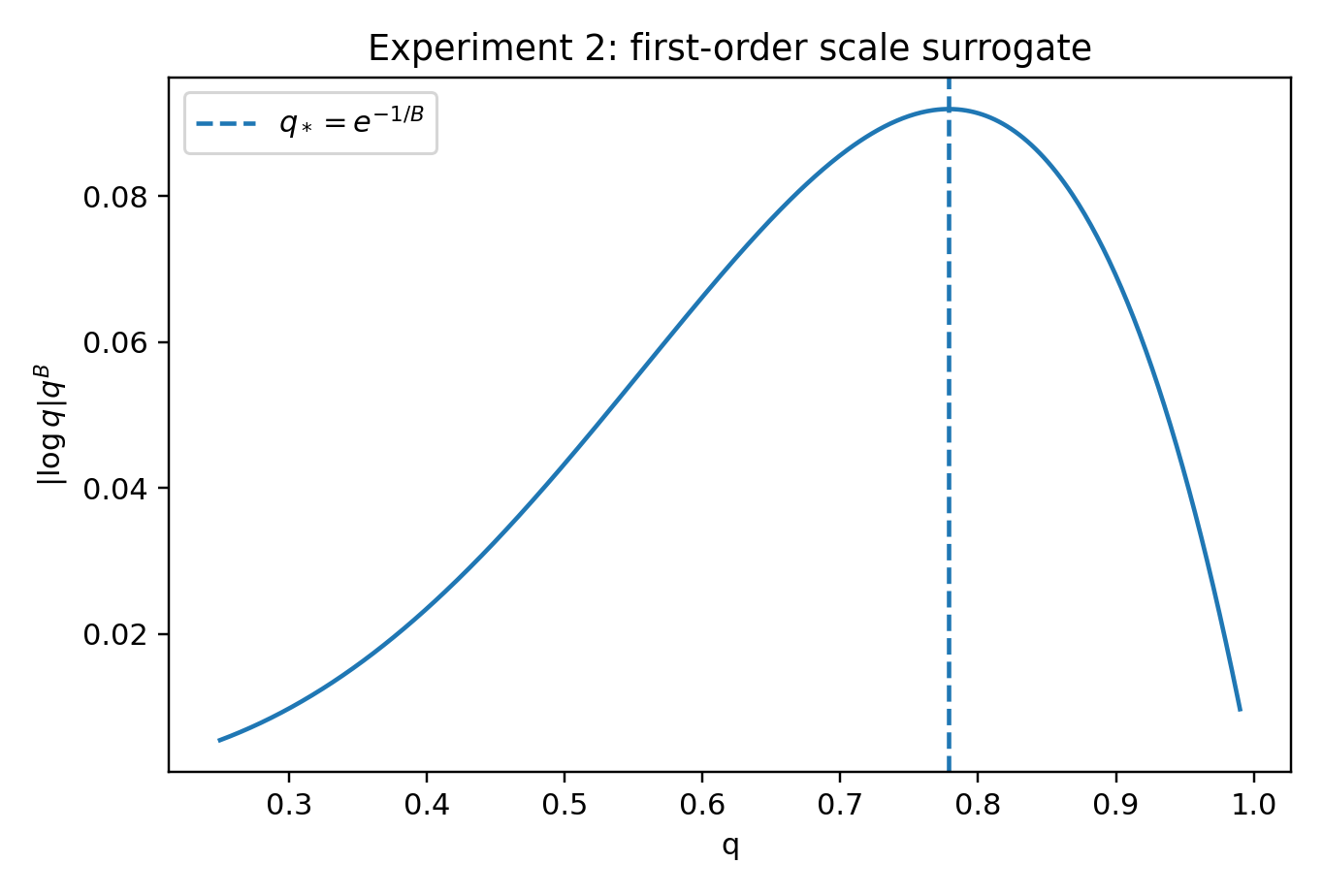}
\caption{The first-order scale surrogate $|\log q|q^B$ for $B=4$.
The dashed line marks the analytic maximizer $q_*=e^{-1/B}$.}
\label{fig:exp2-scale}
\end{figure}

\subsection{Numerical experiment III: complex aliasing and two-scale de-aliasing}

The third experiment tests the multiscale aliasing statement in
Theorem~\ref{thm:multi-alias}.  Let
$$
h_1=-0.8,
\qquad
h_2=-0.8\sqrt{2},
\qquad
q_j=e^{h_j},
$$
so that
$$
\frac{h_1}{h_2}=\frac{1}{\sqrt2}\notin\mathbb Q.
$$
Starting from
$$
\alpha=0.7+1.4i,
$$
we form the exact first-scale alias
$$
\alpha_{\rm alias}
=
\alpha+\frac{2\pi i}{h_1}
=
0.7-6.4539816339\,i.
$$
On the first grid the corresponding nodes differ by only
$2.48\times10^{-16}$, so the two exponents are numerically
indistinguishable at that scale.  On the second grid, however, the
difference is $0.8732$, which removes the alias.  Enumerating the
logarithmic branches generated by the first scale and testing them
against the second selects the true branch with error
$1.11\times10^{-16}$.  As shown in
Figure~\ref{fig:exp3-alias}, the second-scale compatibility residual
drops to machine precision at the correct branch index $k=0$, while
the competing branches remain separated by an $O(1)$ residual.

\begin{figure}[htbp]
\centering
\includegraphics[width=0.68\textwidth]{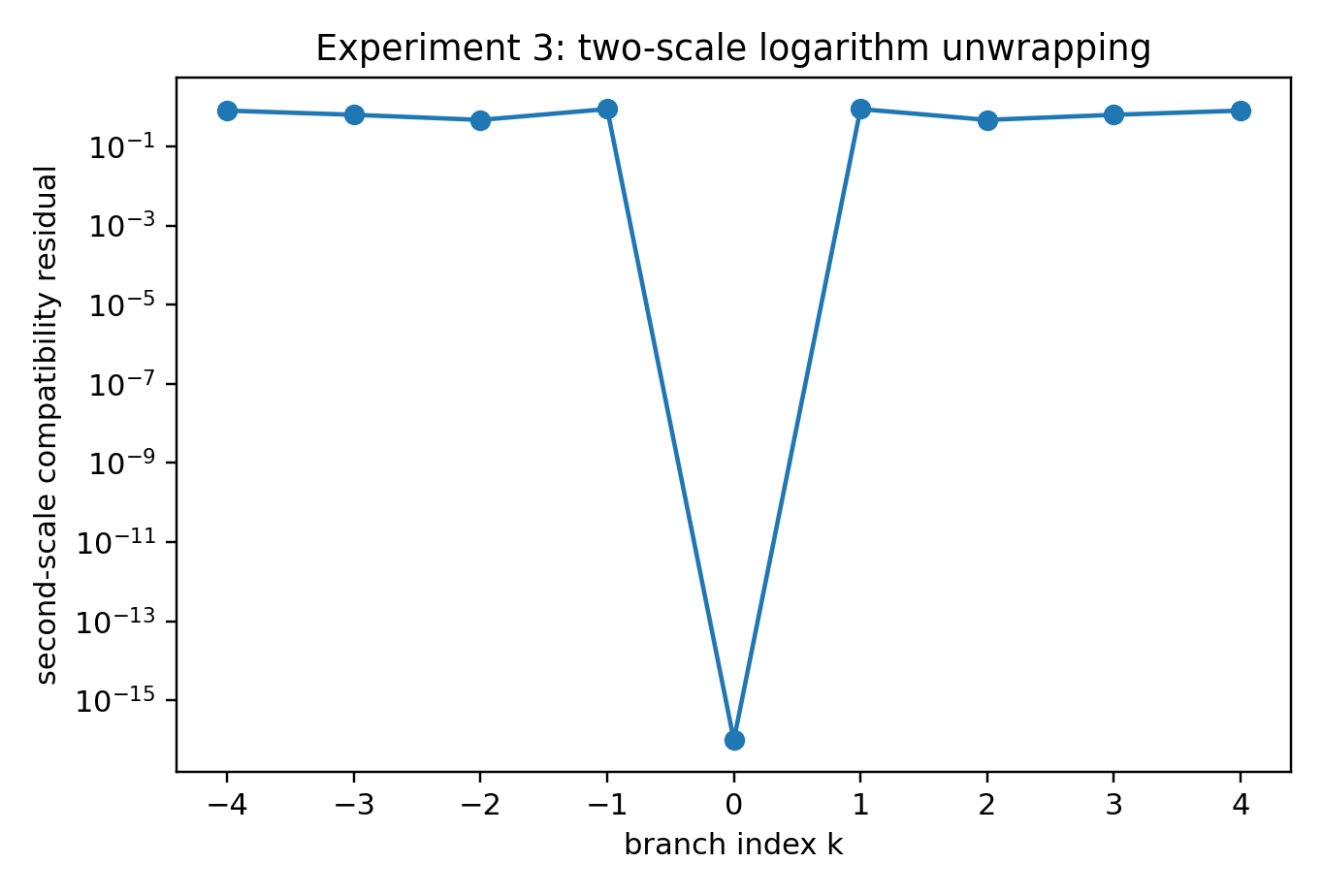}
\caption{Second-scale compatibility residual for the logarithmic
branches generated by the first scale.  The auxiliary scale, whose
logarithmic step is irrationally related to the first, singles out the
correct branch $k=0$.}
\label{fig:exp3-alias}
\end{figure}

\subsection{Numerical experiment IV: clustered spectra under noise}

The fourth experiment examines a clustered real spectrum for which a
sampling ratio close to one produces a severely ill-conditioned Hankel
matrix.  We take
$$
\alpha=(1.80,1.95,2.10),
\qquad
w=(1.0,-0.8,0.6),
$$
with $N=18$ samples and a $7\times7$ Hankel matrix.  Two scales are
compared:
$$
q_{\rm bad}=0.97,
\qquad
q_{\rm good}=0.65.
$$
In the noiseless case, the third singular values are
$$
\sigma_3(H_{\rm bad})=1.103\times10^{-8},
\qquad
\sigma_3(H_{\rm good})=2.922\times10^{-6}.
$$
Thus the better separated scale increases the singular value governing
the rank-three signal subspace by more than two orders of magnitude.

For each relative noise level $\delta$, we perform $150$ independent
trials.  On scale $q_j$, independent real Gaussian noise is added with
standard deviation
$$
\sigma_j
=
\delta\,\frac{\|y^{(j)}\|_2}{\sqrt{N}},
$$
so that $\delta$ measures the noise level relative to the root-mean-square
signal magnitude on that grid.  The two single-scale reconstructions use
the rank-three projected pencil.  The joint reconstruction is initialized
from the $q=0.65$ estimate and refined by two-scale variable projection.
Before refinement, the initial exponents are projected componentwise onto
the admissible interval $[1,3]$.  The nonlinear least-squares solver
reports successful termination in all $150$ trials at every noise level.

Table~\ref{tab:clustered} reports the median exponent RMSE.  The scale
$q=0.97$ fails to resolve the clustered spectrum even at the smallest
tested noise level, whereas $q=0.65$ remains stable over the same range.
The joint refinement is essentially indistinguishable from the
well-separated single-scale reconstruction at $\delta=10^{-8}$, is
slightly less accurate at $\delta=10^{-7}$, and improves the median RMSE
from $1.379\times10^{-1}$ to $1.040\times10^{-1}$ at
$\delta=10^{-6}$.

\begin{table}[htbp]
\centering
\caption{Median exponent RMSE over $150$ trials for the clustered-spectrum
experiment.}
\label{tab:clustered}
\begin{tabular}{cccc}
\toprule
Relative noise
& single $q=0.97$
& single $q=0.65$
& two-scale joint \\
\midrule
$10^{-8}$
& $1.322\times10^{1}$
& $3.694\times10^{-3}$
& $3.694\times10^{-3}$ \\
$10^{-7}$
& $1.004\times10^{1}$
& $3.135\times10^{-2}$
& $3.191\times10^{-2}$ \\
$10^{-6}$
& $1.327\times10^{1}$
& $1.379\times10^{-1}$
& $1.040\times10^{-1}$ \\
\bottomrule
\end{tabular}
\end{table}

The first four experiments therefore illustrate two distinct roles of
the sampling scale: removal of logarithmic aliasing for complex exponents
and improvement of numerical resolution for clustered real spectra.

\subsection{Numerical experiment V: dual-channel contour recovery and certification}

The fifth experiment examines the contour channel separately from generalized-eigenvalue recovery.  We use
$$
\alpha=(1.80+0.80i,\ 1.95+0.85i,\ 2.10+0.90i),
\qquad
w=(1.00,\ -0.80+0.15i,\ 0.60-0.10i),
$$
and compare the near-one scale $q=0.90$ with $q=0.65$.  At each scale, the node-search rectangle is obtained from the a priori exponent box
$$
1.5\le\operatorname{Re}\alpha\le2.4,
\qquad
0.5\le\operatorname{Im}\alpha\le1.2,
$$
by bounding its image under $\alpha\mapsto q^\alpha$ and adding a $3\%$ padding.

For exact data, the determinant phase is accumulated directly along the rectangle boundary.  Sixteen points per edge are sufficient for the adaptive winding to stabilize at both scales, and the total phase change is $6\pi$.  Hence the contour channel returns the count three without solving a generalized-eigenvalue problem; Figure~\ref{fig:exp5-direct-winding} shows the accumulated phase.

\begin{figure}[htbp]
\centering
\includegraphics[width=0.70\textwidth]{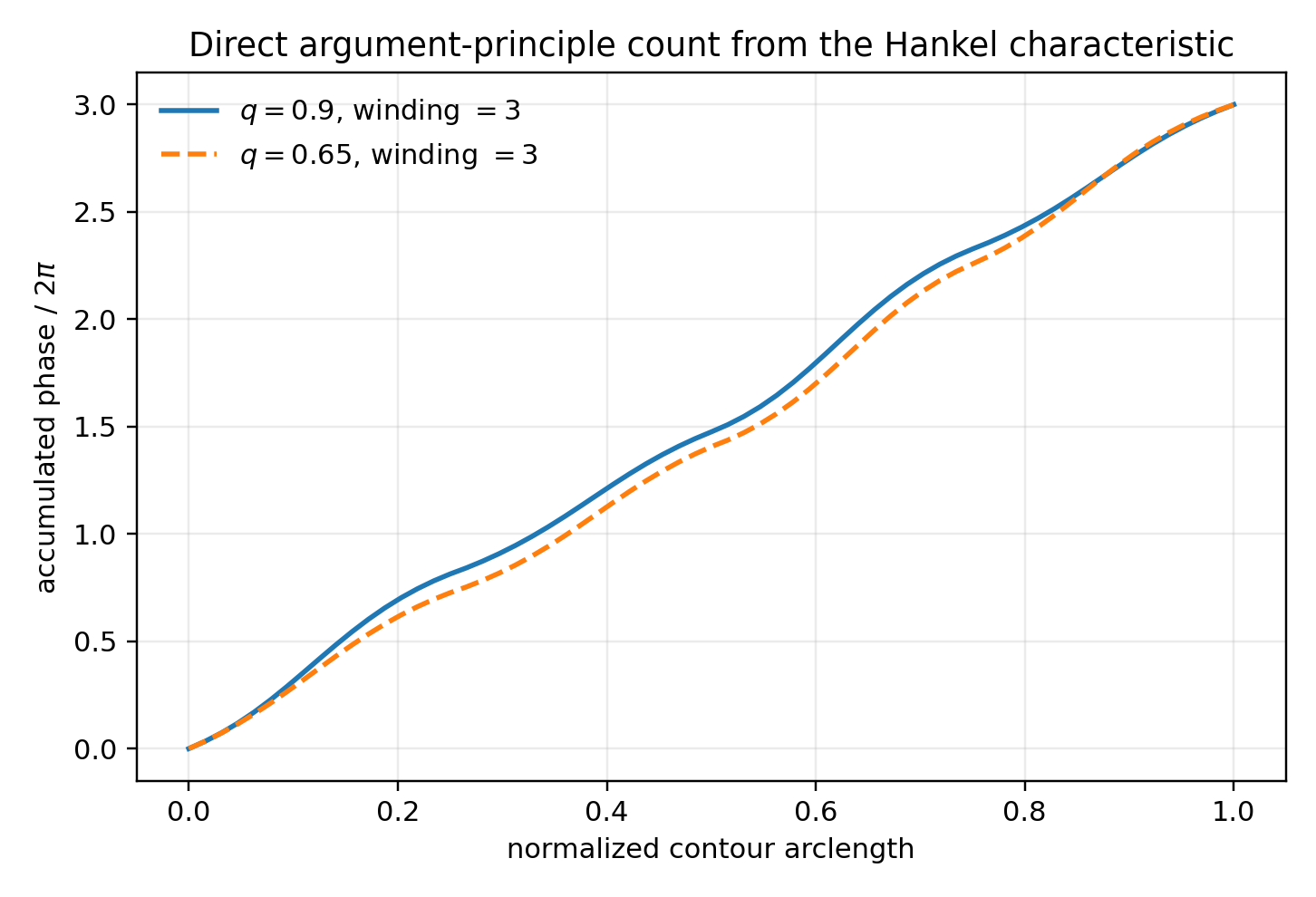}
\caption{Direct argument-principle computation on the two search rectangles.  The accumulated determinant phase completes three turns at both scales.}
\label{fig:exp5-direct-winding}
\end{figure}

For the perturbation study, $N=18$ moments are contaminated by bounded circular complex noise,
$$
|\eta_n|\le\varepsilon,
\qquad
\varepsilon=\delta\frac{\|y\|_2}{\sqrt N}.
$$
The point channel uses a $7\times7$ Hankel window followed by rank-three projection.  The contour channel uses the first six moments to form the minimal $3\times3$ pencil and computes the winding from its determinant characteristic.  A pointwise trial is successful when the maximum matched node error is below $10^{-3}$.  For each $\delta\in\{10^{-13},\ldots,10^{-7}\}$, $150$ independent trials are performed.

The Rouch\'{e} condition involves a continuous minimum along the contour.  In the Monte Carlo experiment this minimum is evaluated numerically on a dense boundary grid; the corresponding curve in Figure~\ref{fig:exp5-dual-channel-noise} should therefore be read as a numerical Rouch\'{e}-test pass rate.  Lemma~\ref{lem:finite-boundary-control} gives a conservative finite-sample lower bound when a fully rigorous boundary certificate is required.

\begin{figure}[htbp]
\centering
\includegraphics[width=0.76\textwidth]{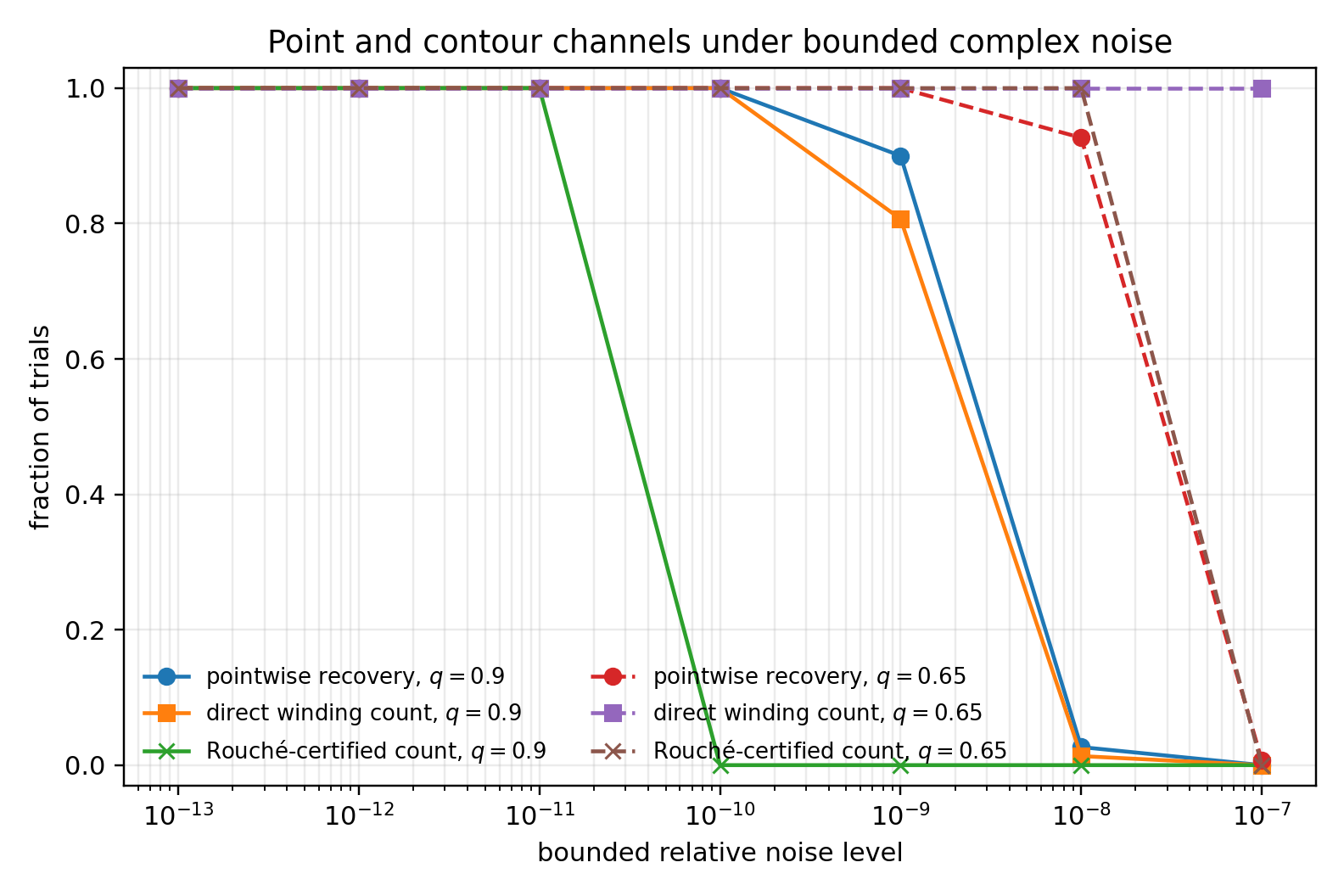}
\caption{Point and contour channels under bounded complex noise.  The winding count is computed directly from the determinant characteristic.  The Rouch\'{e} curve records the numerical pass rate of the data-dependent sufficient condition.}
\label{fig:exp5-dual-channel-noise}
\end{figure}

The distinction between pointwise and regional recovery is most visible at $q=0.65$.  At $\delta=10^{-8}$, the pointwise success rate is $0.927$, whereas the winding count is correct in all $150$ trials and the numerical Rouch\'{e} test also passes throughout.  At $\delta=10^{-7}$, only $0.7\%$ of the point estimates meet the $10^{-3}$ tolerance, but the contour count still returns the correct cardinality in every trial; the Rouch\'{e} test no longer passes at this noise level.  For $q=0.90$, deterioration occurs earlier: at $\delta=10^{-9}$ the pointwise and winding success rates are $0.900$ and $0.807$, respectively.  The experiment also separates empirical correctness from the sufficient certificate.  At $q=0.90$ and $\delta=10^{-10}$, the winding count is correct in every trial although the Rouch\'{e} test already fails.

The same contrast appears in adaptive localization.  At $\delta=10^{-11}$, the subdivision driven by the numerical Rouch\'{e} test retains one count-three cell for $q=0.90$, whereas the $q=0.65$ search region separates into three singleton cells.  The point-channel estimates are overlaid in Figure~\ref{fig:exp5-certified-localization} only for comparison and are not used in the winding computation.

\begin{figure}[htbp]
\centering
\includegraphics[width=0.92\textwidth]{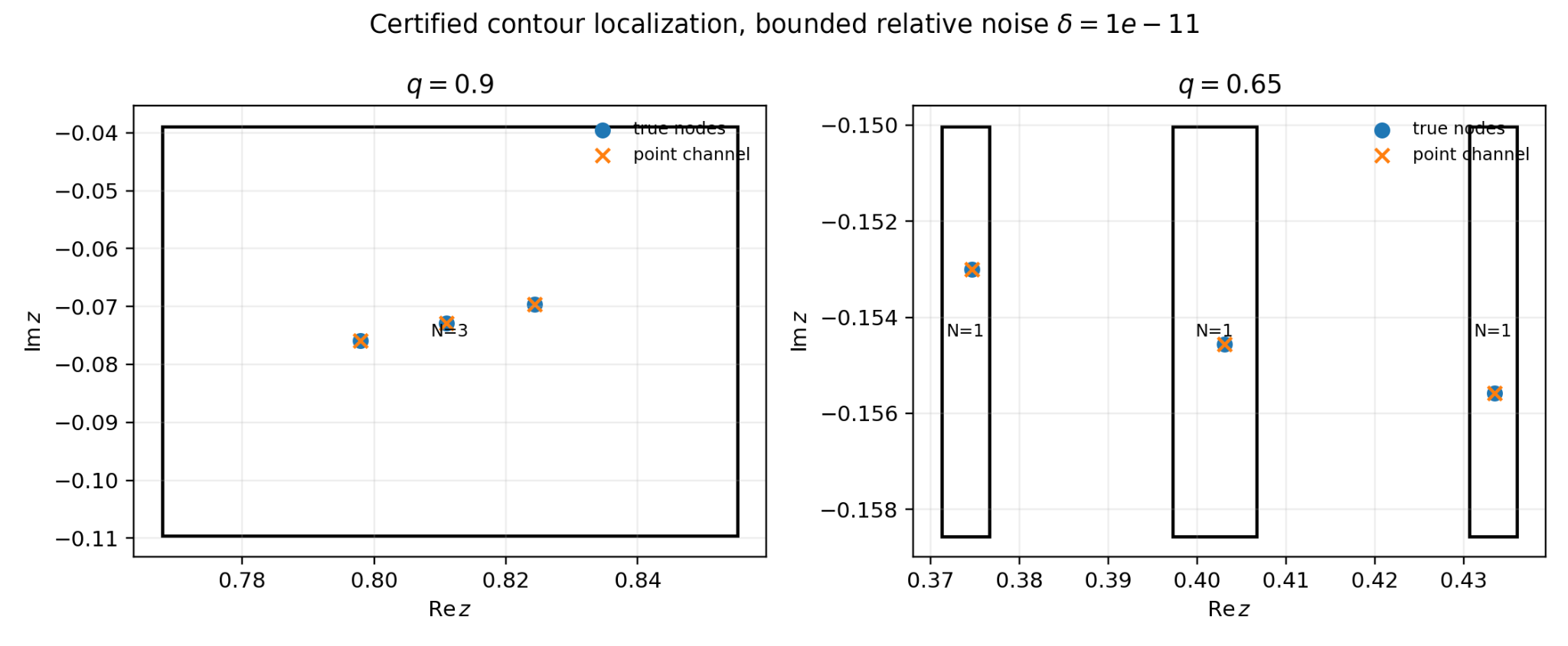}
\caption{Rouch\'{e}-guided localization at $\delta=10^{-11}$.  The near-one scale retains one cell of count three, whereas the auxiliary scale resolves the same spectrum into three singleton cells.  Crosses show the independent point-channel estimates.}
\label{fig:exp5-certified-localization}
\end{figure}

Finally, Corollary~\ref{cor:rouche-margin-q1} is tested on the fixed contour $|z-1|=0.25$.  For $q\in[0.99,0.99999]$, the exact characteristic is evaluated through \eqref{eq:pencil-characteristic-factor} to avoid determinant cancellation.  A least-squares fit of $\log\mu_q(\Gamma)$ against $\log|\log q|$ gives
$$
5.963743,
$$
close to the predicted exponent $r(r-1)=6$.

\begin{figure}[htbp]
\centering
\includegraphics[width=0.68\textwidth]{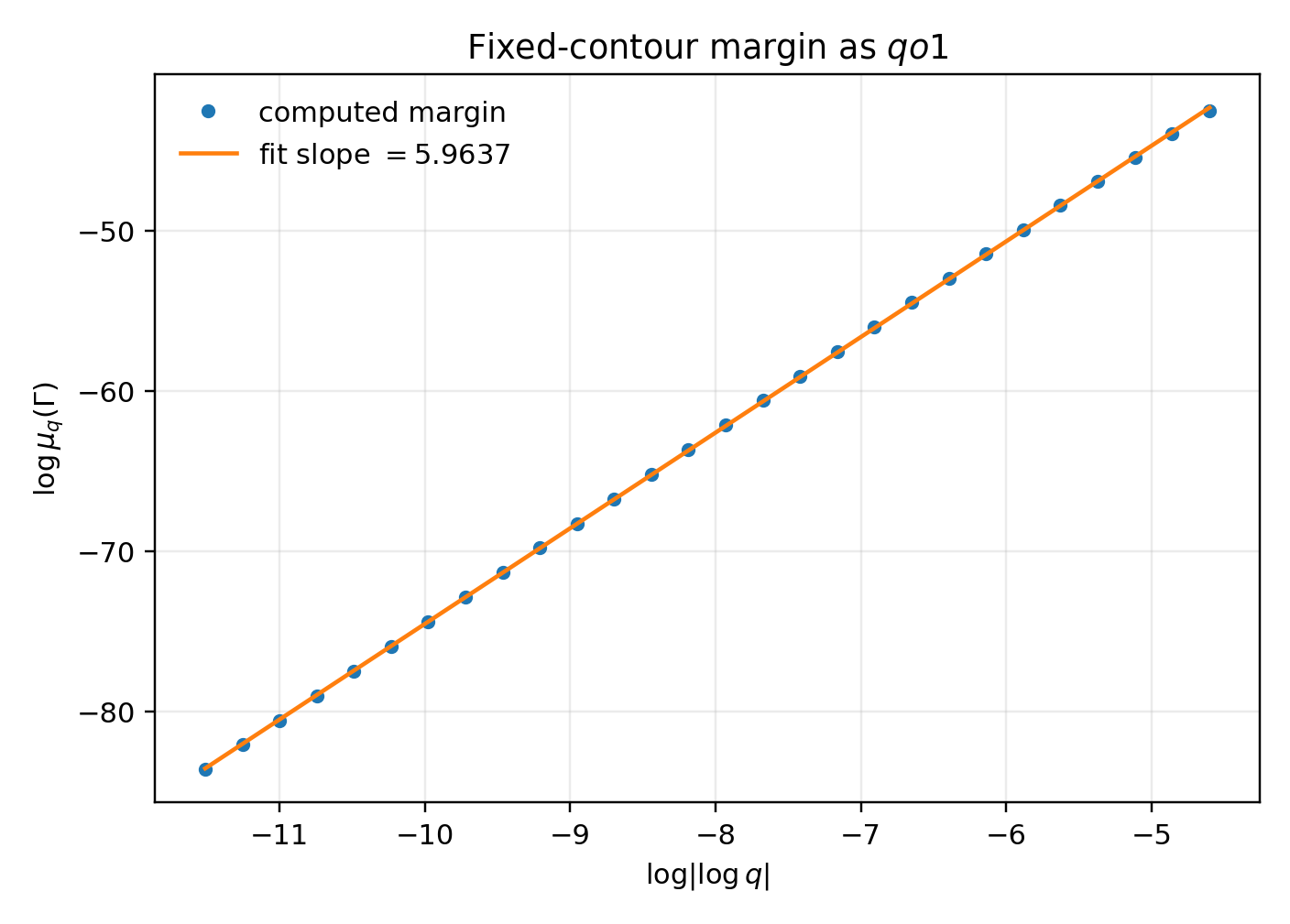}
\caption{Fixed-contour Rouch\'{e} margin as $q\to1$.  The fitted slope $5.9637$ agrees with the predicted exponent $r(r-1)=6$.}
\label{fig:exp5-margin-scaling}
\end{figure}

\section{Conclusion}
\label{sec:conclusion}

Dilation covariance provides an operator-level origin for the Hankel structure generated by geometric Mellin sampling.  The projective multiplication law fixes the scalar normalization that restores additive Hankel indexing, while specialization to a sparse Mellin spectrum recovers the classical finite Prony factorization.  At several scales, the same framework separates two effects of the sampling ratio: incommensurate logarithmic steps remove complex aliasing after cross-scale matching, whereas well-separated scales improve the numerical resolution of clustered spectra.

The determinant of the minimal Hankel pencil gives a complementary regional description of the spectrum.  Its winding number counts nodes without first resolving them individually, and Rouch\'{e}-type bounds give noise-dependent conditions under which this count persists.  The resulting point and contour channels therefore carry different information, as seen numerically when regional counts remain stable beyond the regime of accurate point recovery.  The present perturbation estimates are sufficient rather than sharp; a finer multiscale analysis of clustered Vandermonde systems would be needed for quantitative resolution limits.

\appendix
\section{A \texorpdfstring{$q$-Fock}{q-Fock} realization}
\label{sec:qfock}

The commutation relation used above also appears in the analytic realization of the Arik--Coon oscillator.  Here $q$ is the deformation parameter, not a Mellin sampling ratio.  The purpose of this appendix is only to record the resulting finite spectral problem and the two-probe identification of $q$.

On $\mathbb C[z]$, let
\begin{equation}\label{eq:bargmann-ops}
(M_zF)(z)=zF(z),
\qquad
(T_qF)(z)=F(qz),
\qquad 0<q<1.
\end{equation}
Then $T_qM_z=qM_zT_q$.  The Jackson derivative
\begin{equation}\label{eq:Dq}
(\mathfrak D_qF)(z)=\frac{F(z)-F(qz)}{(1-q)z}
\end{equation}
has the removable value $F'(0)$ at the origin and satisfies
\begin{equation}\label{eq:qosc}
\mathfrak D_qM_z-qM_z\mathfrak D_q=\Id
\end{equation}
\cite{Jackson1909,KacCheung2002}.

Write
$$
[n]_q=\frac{1-q^n}{1-q},
\qquad
[n]_q!=\prod_{k=1}^{n}[k]_q,
$$
and equip the span of
$$
e_n(z)=\frac{z^n}{\sqrt{[n]_q!}},
\qquad n\ge0,
$$
with the inner product for which $\{e_n\}$ is orthonormal.  Its completion is the Arik--Coon analytic Fock space \cite{ArikCoon1976}.  Identifying $e_n$ with $|n\rangle$, the annihilation operator $a_q=\mathfrak D_q$ satisfies
$$
a_q|0\rangle=0,
\qquad
a_q|n\rangle=\sqrt{[n]_q}\,|n-1\rangle .
$$
For $|\lambda|^2<1/(1-q)$, the normalized coherent state is
\begin{equation}\label{eq:coherent-state}
|\lambda;q\rangle
=\cN_q(|\lambda|^2)^{-1/2}
\sum_{n=0}^{\infty}
\frac{\lambda^n}{\sqrt{[n]_q!}}|n\rangle,
\qquad
\cN_q(t)=\sum_{n=0}^{\infty}\frac{t^n}{[n]_q!},
\end{equation}
and $a_q|\lambda;q\rangle=\lambda|\lambda;q\rangle$.

Consider a finite superposition
\begin{equation}\label{eq:qfock-superposition}
|\psi\rangle=\sum_{\ell=1}^{r}c_\ell|\lambda_\ell;q\rangle
\end{equation}
with pairwise distinct nodes $\lambda_\ell$.  For a probe $|\phi\rangle$,
\begin{equation}\label{eq:probe-moments}
\mu_n^{(\phi)}
:=\langle\phi|a_q^n|\psi\rangle
=\sum_{\ell=1}^{r}
c_\ell\langle\phi|\lambda_\ell;q\rangle\lambda_\ell^n.
\end{equation}
Thus each probe produces the same finite-exponential structure as the Mellin model, with probe-dependent effective amplitudes.  In particular, the associated Hankel matrix has rank equal to the number of visible distinct nodes once its size is large enough.

A single probe does not in general separate $q$ from the coefficients $c_\ell$.  Number-state probes have the explicit overlaps
\begin{equation}\label{eq:number-overlap}
\langle m|\lambda;q\rangle
=\cN_q(|\lambda|^2)^{-1/2}
\frac{\lambda^m}{\sqrt{[m]_q!}}.
\end{equation}
Set $w_\ell^{(m)}=c_\ell\langle m|\lambda_\ell;q\rangle$.  The node sets are common to the different probes, so the recovered effective amplitudes can be matched by node.

\begin{proposition}[Two-probe identification of $q$]
\label{prop:q-identification}
Assume $0<q<1$ and $|\lambda_\ell|^2<1/(1-q)$.  Let a component with $\lambda_\ell\ne0$ and $c_\ell\ne0$ be visible to the probes $|0\rangle$ and $|2\rangle$, and suppose that $\lambda_\ell$, $w_\ell^{(0)}$, and $w_\ell^{(2)}$ have been recovered from the two Hankel data sets.  Then
\begin{equation}\label{eq:weight-ratio-q}
\frac{w_\ell^{(2)}}{w_\ell^{(0)}}
=\frac{\lambda_\ell^2}{\sqrt{1+q}},
\end{equation}
and hence
\begin{equation}\label{eq:q-recovery}
q=
\frac{|\lambda_\ell|^4}
{\left|w_\ell^{(2)}/w_\ell^{(0)}\right|^2}-1.
\end{equation}
Every visible nonzero component gives the same value of $q$ in the noiseless setting.
\end{proposition}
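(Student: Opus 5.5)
The plan is to compute the two effective amplitudes explicitly from the number-state overlap formula \eqref{eq:number-overlap} and take their quotient.

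\textbf{The ratio.} By definition $w_\ell^{(m)}=c_\ell\langle m|\lambda_\ell;q\rangle$. Set $m=0$ and $m=2$, and use $[0]_q!=1$ together with
$$
[2]_q!=[1]_q[2]_q=1\cdot\frac{1-q^2}{1-q}=1+q .
$$
This gives
$$
w_\ell^{(0)}=c_\ell\cN_q(|\lambda_\ell|^2)^{-1/2},
\qquad
w_\ell^{(2)}=c_\ell\cN_q(|\lambda_\ell|^2)^{-1/2}\frac{\lambda_\ell^2}{\sqrt{1+q}} .
$$
The hypothesis $|\lambda_\ell|^2<1/(1-q)$ guarantees that $\cN_q(|\lambda_\ell|^2)$ is a convergent series of positive terms. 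Its constant term is $1$, so it is finite and positive. Because $c_\ell\ne0$, we have $w_\ell^{(0)}\ne0$. Dividing, the common factor $c_\ell\cN_q^{-1/2}$ cancels and \eqref{eq:weight-ratio-q} follows.

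\textbf{Solving for $q$.} Take squared moduli of \eqref{eq:weight-ratio-q}:
$$
\left|w_\ell^{(2)}/w_\ell^{(0)}\right|^2=\frac{|\lambda_\ell|^4}{1+q},
$$
using that $\sqrt{1+q}>0$. Since $\lambda_\ell\ne0$, the left side is nonzero, so we may invert it. Solving $1+q=|\lambda_\ell|^4/|w_\ell^{(2)}/w_\ell^{(0)}|^2$ gives \eqref{eq:q-recovery}.

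\textbf{Same value for every component.} The right side of \eqref{eq:q-recovery} depends only on the true deformation parameter $q$ of the operator $a_q$. The component index enters only through quantities that cancel identically. So every visible nonzero component yields the same $q$ in the noiseless setting.

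\textbf{Main obstacle.} The only delicate point is justifying that the recovered $w_\ell^{(0)}, w_\ell^{(2)}$ really are $c_\ell\langle m|\lambda_\ell;q\rangle$ for the same $\ell$. This rests on two facts. First, the moments \eqref{eq:probe-moments} have exactly the finite exponential form, so the factorization of Proposition~\ref{prop:scale-factorization} applies to each probe's Hankel data. Second, the nodes are common to both probes, so the amplitudes can be matched by node. That step is taken as given by the hypothesis "have been recovered". Beyond it, only the observation $\cN_q>0$ is needed to make the division legitimate.
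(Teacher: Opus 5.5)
Your proposal is correct and follows the paper's proof: both compute $w_\ell^{(0)}$ and $w_\ell^{(2)}$ from the number-state overlap formula, use $[2]_q!=1+q$, divide, and take squared moduli to solve for $q$. Your added checks (finiteness and positivity of $\cN_q(|\lambda_\ell|^2)$, and nonvanishing of $w_\ell^{(0)}$ and of the ratio) are details the paper leaves implicit.
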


\begin{proof}
Equation \eqref{eq:number-overlap} gives
$$
w_\ell^{(0)}
=c_\ell\cN_q(|\lambda_\ell|^2)^{-1/2},
\qquad
w_\ell^{(2)}
=c_\ell\cN_q(|\lambda_\ell|^2)^{-1/2}
\frac{\lambda_\ell^2}{\sqrt{[2]_q!}}.
$$
Since $[2]_q!=1+q$, taking the ratio gives \eqref{eq:weight-ratio-q}; taking absolute values yields \eqref{eq:q-recovery}.
\end{proof}

For a numerical check, take $q=0.62$, $\lambda_1=0.35$, $\lambda_2=0.70$, and $c=(1.10,-0.80)$.  The two probe Hankel pencils recover the common nodes, and \eqref{eq:q-recovery} gives
$$
\widehat q_1=\widehat q_2=0.620000000000000,
\qquad
\max_\ell|\widehat q_\ell-q|=1.67\times10^{-15}.
$$
The ratio in \eqref{eq:weight-ratio-q} removes both the superposition coefficient and the coherent-state normalization.

The Mellin and $q$-Fock realizations therefore share the same commutation law but encode inverse information differently.  In the Mellin model the sampling ratio changes the nodes and their aliasing; in the Fock realization the deformation parameter enters the probe-dependent amplitudes.

\end{document}